\newcommand{\ignore}[1]{}
\newcommand{\beqn}{\begin{eqnarray*}}
\newcommand{\eeqn}{\end{eqnarray*}}
\newcommand{\ii}{\mathrm{i}}
\newcommand{\ee}{\mathrm{e}}
\newcommand{\R}{\mathbb{R}}
\newcommand{\C}{\mathbb{C}}
\newcommand{\N}{\mathbb{N}}
\newcommand{\EE}{{\mathcal E}}
\newcommand{\GG}{{\mathcal G}}
\newcommand{\HH}{{\mathcal H}}
\newcommand{\KK}{{\mathcal K}}
\newcommand{\UU}{{\mathcal U}}
\newcommand{\VV}{{\mathcal V}}
\newcommand{\XX}{{\mathcal X}}
\newcommand{\Matrix}[1]{\ensuremath{\left[\begin{array}{ccccccccccccccccr} #1 \end{array}\right]}}
\newcommand{\Matrixc}[1]{\ensuremath{\left(\begin{array}{cccccccccccc} #1 \end{array}\right)}}
\newcommand{\vc}[1]{\mathbf{#1}}
\newtheorem{theorem}{Theorem}[section]
\newtheorem{lemma}[theorem]{Lemma}
\newtheorem{definition}[theorem]{Definition}
\newtheorem{corollary}[theorem]{Corollary}
\newtheorem{proposition}[theorem]{Proposition}
\newtheorem{remark}[theorem]{Remark}
\newtheorem{example}[theorem]{Example}
\renewcommand{\epsilon}{\varepsilon}
\newcommand{\RED}[1]{{\color{red}{#1}}}
\title{Branching Ratios of Input Trees\\for Directed Multigraphs}  
\author{
Paolo Boldi \\ Computer Science Dept. \\ Universit\`a degli Studi \\ Milano, Italy
\and 
Ian Stewart \\ Mathematics Institute
\\ University of Warwick \\ Coventry CV4 7AL, UK
}
\begin{document}

\maketitle 

\begin{abstract}
We define the branching ratio of the input tree of a node in a finite directed multigraph,
prove that it exists for every node,
and show that it is equal to the largest eigenvalue of the adjacency matrix of the induced subgraph
determined by all upstream nodes. This real eigenvalue
exists by the Perron--Frobenius Theorem for non-negative matrices. We motivate our analysis with simple
examples, obtain information about the asymptotics for the limit growth of the input tree, and
establish other basic properties of the branching ratio.
\end{abstract}

\section{Introduction}

By a `network' we mean a finite directed multigraph (that is, self-loops and multiple
edges are permitted).
The branching ratio (or fractal dimension) of a node $i$ in a network quantifies
the growth rate of its input tree; equivalently, of the number of walks of given length $\ell$
that terminate at node $i$. In simple examples, this number is asymptotic to
$C\rho^\ell$ for $C, \rho > 0$, and the branching ratio is $\rho$. Moreover,
$\rho$ is the spectral radius (maximal real eigenvalue, or Perron eigenvalue) of the
adjacency matrix of the subnetwork induced by all nodes that lie on
directed paths terminating at $i$.
We show that a similar result holds in general, except that the asymptotic form
of the growth rate can be more complicated. The result is not a surprise,
but a proof is more complicated than might be expected and we have found 
neither statements nor proofs for this fact in the literature, despite the extensive attention paid
to the spectral theory of graphs and digraphs---for example, in \cite{AO,B74,B10,BH90,B08,LT85,T84}. 

We encountered this problem in connection with
synchrony patterns in biological networks \cite{MBSS25}.
Because  the branching ratio has important biological applications,
we provide the required statements and proofs. 
The obvious approach using the Perron--Frobenius Theorem
runs into technical difficulties since the dominant terms in the asymptotics 
(see the proof of Lemma \ref{L:upper_bound}) might, in principle, cancel out. 
We use an indirect method to show that this does not happen.

\subsection{Motivation}

To motivate the  mathematics we consider a small network (`circuit') that occurs
in several kinds of biological systems, such as gene regulatory networks
and connectomes. Throughout the paper we employ the usual notations $\sim, O(\cdot), o(\cdot)$ for
asymptotic behaviour. Any terms not yet defined are defined later.

\begin{example}\em
\label{ex:fibo}

The {\em Fibonacci circuit} of \cite{LMRASM20, MM19} is shown in Fig.\ref{F:fibo}.
It is strongly connected, and has two nodes $1$, $2$ and three edges $e_1,e_2,e_3$.

\begin{figure}[h!]
\centerline{%
\includegraphics[width=0.2\textwidth]{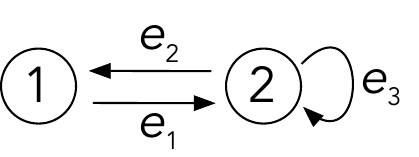}}
\caption{Fibonacci circuit.}
\label{F:fibo}
\end{figure}

\noindent
Its adjacency matrix is
\[
A = \Matrix{0 & 1\\1 & 1},
\]
and the eigenvalues of $A$ are $\frac{1\pm\sqrt{5}}{2}$, so the spectral radius (maximal or Perron eigenvalue) is
$\frac{1+\sqrt{5}}{2} = \phi$, the golden ratio.

We focus on walks through this network,
which are sequences of edges joined head to tail. The length of a walk
is the number of edges in the sequence.
We ask: How many walks of length $\ell$ are there that terminate
at a given node $i\in \{1,2\}$? Call this number $a_i(\ell)$.
From the graph, tracking back one step from each node, we see that
\[
a_1(\ell+1) = a_2(\ell) \qquad \quad a_2(\ell+1) = a_1(\ell)+a_2(\ell)
\]
so
\begin{equation}
\label{E:fibo_rec}
\begin{array}{l}
a_1(\ell+2) = a_2(\ell+1) = a_1(\ell)+a_2(\ell) = a_1(\ell)+a_1(\ell+1) \\
a_2(\ell+2) = a_1(\ell+1)+a_2(\ell+1) = a_2(\ell)+a_2(\ell+1)
\end{array}
\end{equation}
Initial values are $a_1(0)=1, a_1(1)=1, a_2(0)=1, a_2(1)=2$.
Let $F_\ell$ be the $\ell$-th
Fibonacci number, that is, $F_0=F_1 = 1$ and $F_\ell = F_{\ell-1}+F_{\ell-2}$ for $\ell \geq 2$.
It is well known that the recurrences \eqref{E:fibo_rec} lead to:
\[
a_1(\ell) = F_\ell \qquad a_2(\ell)  = F_{\ell+1}
\]
Therefore
\[
\frac{a_1(\ell+1)}{a_1(\ell)} = \frac{F_{\ell+1}}{F_\ell} \qquad \frac{a_2(\ell+1)}{a_2(\ell)} = \frac{F_{\ell+2}}{F_{\ell+1}}
\]
both of which tend to $\phi$ as $\ell \to \infty$.

Moreover, we have asymptotic expressions
\begin{equation}
\label{E:fibo_asymp}
a_1(\ell) \sim  \frac{1}{\sqrt{5}}\phi^\ell \qquad  a_2(\ell) \sim  \frac{1}{\sqrt{5}}\phi^{\ell+1}= \frac{\phi}{\sqrt{5}}\phi^\ell
\end{equation}
as $\ell \to \infty$.

This example involves three distinct ways to think about the branching ratio:
as an eigenvalue, as a topological feature, and as an asymptotic growth rate.
The same goes for an arbitrary network.
All three interpretations are closely linked, and we repeatedly pass between them
in proofs. We do this because some properties are obvious in one representation
but not in others.

\subsection{Biological Background} 

Makse and coworkers have applied branching ratios to biological
networks, in particular to gene regulatory networks \cite{LMRASM20,MLM20} and connectomes
\cite{MM19}. The theory and applications of these ideas are treated 
comprehensively in \cite{MBSS25}. The input tree quantifies the passage
of information through the network to the given node, and the branching ratio is
an important isomorphism invariant of the input tree. The main issue is the
existence (or not) of {\em cluster synchrony} in the network, which occurs
when certain sets of nodes have identical dynamics in any system
of ordinary differential equations (ODEs) the respects the network topology.

Cluster synchrony is governed by the
essentially equivalent notions of a {\em graph fibration} \cite{BV02,BV02a}, 
a {\em balanced colouring} \cite{GS23}, and an {\em equitable partition} \cite{BV02,S74}.
Any of these essentially equivalent concepts determines the possible robust
synchrony patterns in networks of coupled ODEs.
A {\em colouring} of the nodes assigns to each node $i$ an element $\kappa(i)$
of a set of {\em colours} $\KK$. The colouring
is {\em balanced} if nodes with the same colour
have colour-isomorphic input sets. A graph fibration
is a graph homomorphism $\phi:\GG \to \HH$ that preserves input sets, and its fibres
(the sets $\phi^{-1}(i)$ for $i \in \HH$) determine a colouring. A
{\em partition} divides the nodes into disjoint subsets
according to their colours, and the partition is {\em equitable} if the
corresponding colouring is balanced. The subsets of
nodes with a given colour are called {\em clusters}; a cluster is the same
as a fibre of a fibration or a part of an equitable partition.

Among all balanced colourings there is a unique coarsest one \cite{S07b}, also called
the {\em minimal equitable partition}. It can be characterised by the topological
type of the infinite input tree:
the coarsest balanced colouring is the one for which nodes have the same colour
if and only if they have isomorphic input trees \cite{BV02,S07b}. The minimal equitable partition
determines the most highly synchronised set of clusters; that is, the one
with fewest colours.

\subsection{Distributed Systems}

The role of graph fibrations in the computational power of distributed systems was highlighted in the 90s~\cite{BV96, BV97}: the idea is that if all processors in a synchronous distributed system start from the same state and have the same input tree, they will always be in the same state in the future, whichever protocol they are executing. Hence, the shape of the input tree essentially constrains and determines the behaviour of processors in a computational network. Of course, if we relax some assumption (for instance, if we allow messages to take some unknown time to travel along the edges, or if we assume that the computation happens asynchronously, or if we allow nodes to start from different states) the above condition becomes true only \emph{in the worst case}---we can never assume for certain that nodes with the same input tree will behave in a different way.

Fibrations (the morphism-theoretic view of equitable partitions, essentially
equivalent to balanced colourings) become the natural way to describe how the behaviour of a network $G$ can be simulated on a (typically: smaller) network $H$, having a different topology but the same input trees. When this simulation game is brought to its limit, we are considering the coarsest equitable partition, leading to a minimal fibration, and $H$ in this case is the minimum fibration base of $G$.
The minimum fibration base is the smallest graph having the same set of input trees as $G$.
It is equivalent to the `quotient network' of \cite{GS23}.

In a distributed system, the input tree of node $i$ represent how messages coming from  nodes of the network can reach $i$; more precisely, every walk of length $\ell$ starting at $j$ and ending at $i$ can be seen as a message that is sent by $j$ and is delivered to $i$ after $\ell$ passages. In this sense, the number of walks of length $\ell$ ending at $i$ is the (maximum) number of messages that $i$ can receive at the $\ell$-th round of a synchronous computation. The growth rate corresponds intuitively to how this quantity grows as $\ell$ goes to infinity. 

It may seem that this asymptotic behaviour is not really relevant: after all, Dana Angluin~\cite{A80} showed that in a network of $n$ processors the first $n-1$ levels of the input tree determine the rest. But there are situations in which $n$ is unknown to the processors, or in which the initial state is arbitrarily corrupted: having explicit results on the asymptotic behaviour becomes crucial in such settings.

\subsection{Summary of Paper and Main Results}

Section \ref{S:GTP} sets up the main graph-theoretic notions required
and the properties that we use:
adjacency matrices, walks, input trees, strongly connected components,
the upstream subnetwork of a node, a formula for the number $a_i(\ell)$
of walks of length $\ell$ terminating at node $i$, reducible and irreducible matrices.

The most obvious definition of the branching ratio is the limit of the ratios $a_i(\ell+1)/a_i(\ell)$ as
$\ell$ tends to infinity. In Section \ref{S:BR} we observe that
this definition fails for some multigraphs. We therefore
propose a different definition of
the branching ratio $\delta(i)$, which exists for all finite multigraphs, and establish some basic properties.
For convenience, and to clarify this summary, we also state the definition here: 
\begin{equation}
\label{E:BR_def}
 \delta(i) =
  \lim_{\ell \to \infty} (a_i(\ell))^{1/\ell}
  \end{equation}
At this stage we do not know that this sequence converges, but
Lemma \ref{L:bounds} proves that the limit 
exists provided certain upper and lower bounds hold, and expresses
that limit in terms of those bounds.
Lemma \ref{L:upper_bound} gives an upper bound
$a_i(\ell) \leq P(\ell)\rho(i)^\ell$,
where $P(\ell)$ is an eventually positive polynomial in $\ell$.
Thus it remains only to prove a suitable lower bound on $a_i(\ell)$.
Proposition \ref{P:upstream_principle} proves
the `upstream principle', which is used later to obtain the required 
lower bound on $a_i(\ell)$.
Section \ref{S:ME} presents a series of motivating examples: small
networks for which $a_i(\ell)$ is computed explicitly. These examples
motivate the main theorems and the asymptotic expressions
for $a_i(\ell)$ derived in Section \ref{S:A}.

Section \ref{S:PFT} recalls relevant parts of the Perron--Frobenius Theorem,
which is the key to all subsequent results. We define $\rho(i)$ to be the
Perron eigenvalue (spectral radius) of the adjacency matrix of the
upstream subnetwork $\UU(i)$.

The main theorems about the branching ratio
are now derived in two stages.

In Section \ref{S:BRSCNP} we prove
that for any strongly connected network, of any period, 
the limit in \eqref{E:BR_def} converges
and is equal to the Perron eigenvalue $\rho(i)$. See 
Theorem \ref{T:BRexists_any_h}. The proof is
based on a formula for a C\'esaro average, which is part of the 
Perron--Frobenius Theorem.

Section \ref{S:BRGC1} uses Theorem \ref{T:BRexists_any_h} 
and the Upstream Principle to remove the assumption that the network
is strongly connected. Specifically, Theorem \ref{T:gencase} proves 
that if $\GG$ is any network then the limit in \eqref{E:BR_def} converges
to $\rho(i)$.
This completes the analysis of the branching ratio.

Section \ref{S:A} extends the results in a different direction. We refine the
results about the branching ratio to give more precise asymptotics for
$a_i(\ell)$ as $\ell \to \infty$. We develop the results in two steps:  
first for any strongly connected network, then for
any network. These asymptotic expressions respectively involve a single constant,
a finite set of constants depending on $\ell \pmod{h}$, and a
finite set of polynomials depending on $\ell \pmod{g}$, where $g$ is the
lcm of the periods of strongly connected components
 with Perron eigenvalue equal to $\rho(i)$.

\section{Graph-Theoretic Preliminaries}
\label{S:GTP}

Different areas of application use different terminology and notation for the
same graph-theoretic concept. We therefore clarify our usage here.

Let $\GG =(\VV,\EE,s,t)$ be a finite directed multigraph, that is, a directed graph in which
multiple edges and self-loops are permitted. For simplicity
we call such a graph a {\em network}. Here, $\VV$ is the set
of {\em vertices} (nodes) and $\EE$ is the set of (directed) {\em edges} (arrows, arcs).
If $e \in \EE$ the map $s:\EE\to\VV$ determines the source (tail) $s(e)$ of $e$, and the 
map $t:\EE\to\VV$ specifies the target (head) $t(e)$ of $e$.
 
Given a set of vertices $X\subseteq\VV$, we write $\GG[X]$ for the subgraph {\em induced} by $X$, that is, the graph having the elements of $X$ as vertices, and all arcs with source or target in $X$.

\subsection{Adjacency Matrix}

\begin{definition}\em
\label{D:adjacency}
The {\em adjacency matrix} of a network $\GG$ with $n$ nodes
 is the $n \times n$ matrix $A = (a_{ij})$ such that $a_{ij}$
 is the number of edges from node $i$ to node $j$.
\end{definition}

Obviously, the adjacency matrix $A$ depends on the way in which the nodes of $\GG$ are ordered: different orderings of the nodes will induce different permutations of (the rows and columns of) $A$.
This fact will have relevance in some of the statements.
This definition is convenient for working with row vectors, which we do here. Some sources use the transpose of this matrix, which is more convenient for network dynamics since column vectors are commonly used there. 

\begin{definition}\em
\label{D:spectrum}
The {\em spectrum} of $A$ (and of $\GG$) is the set of eigenvalues
of $A$. The {\em spectral radius} of $A$ (and of $\GG$) is the maximum 
absolute value of the eigenvalues of $A$. Since $A$ is a non-negative matrix, the spectral radius itself is also an eigenvalue (although not always the only eigenvalue of maximum modulus), usually called the {\em Perron eigenvalue}.
\end{definition}

\subsection{Walks}
\label{S:walk}

\begin{definition}\em
\label{D:walk}
Let $\GG$ be a finite network. A {\em walk} in $\GG$ {\em from} node $j$ {\em to} node $i$
is a sequence of nodes $x_t$ and edges $e_t$ of the form
\[
(x_0, e_1, x_1, e_2,x_2, \ldots, e_\ell, x_\ell)
\]
where $x_0=i$ and $x_\ell = j$,
 such that:
\beqn
x_j &=& t(e_{j+1}) \quad 0 \leq j \leq \ell-1 \\
x_j &=& s(e_j)\qquad 1 \leq j \leq \ell.
\eeqn
The {\em length} of the walk is $\ell$. 
\end{definition}

Schematically, a walk from $j$ to $i$ looks like this:
\[
i=x_0\stackrel{e_1}{\leftarrow} x_1\stackrel{e_2}{\leftarrow}x_2 \leftarrow\cdots\leftarrow x_{\ell-1} \stackrel{e_\ell}{\leftarrow} x_\ell=j.
\]
We number the nodes and edges in this order for convenience,
since we think of constructing such walks backwards, starting from the final target 
node and iterating input edges.

The sequence of edges $(e_j)$ determines the walk uniquely,
except in the trivial case where $\ell = 0$, when the walk consists of
node $x_0=i=j$ alone. However, it is more clear and slightly more convenient to include the nodes
in the definition.

In the network dynamics literature a walk is often termed a `path'. In graph theory a `path'
is usually defined to be a walk without repeated nodes or edges. A `trail'
has no repeated edges. Counting paths or trails is more difficult than counting walks,
and here we focus solely on walks.

\begin{definition}\em
\label{D:number_of_walks}
Let $i$ be a node in a network $\GG$. Then we write
\[
a_i(\ell) = \mbox{number of walks of length $\ell$ terminating at $i$}.
\]
\end{definition}
This is the main quantity that we study in this paper, focusing
on how $a_i(\ell)$ grows as $\ell$ increases indefinitely.

\subsection{Input Trees}

The walks of given length that terminate at node $i$ can be organised into
the {\em input tree} of that node~\cite{BV02,GS23,S07b}. Each node in
the input tree corresponds bijectively to a walk terminating at $i$,
which is the unique walk from that node of the input tree to the tree root, which corresponds to node $i$.

For instance, the input
trees for Example \ref{ex:fibo} are shown in Fig.\ref{F:fibo_input_trees}.
The walk corresponding to node 1 at the top of the left-hand tree
is $1 \leftarrow 2 \leftarrow 1 \leftarrow 2 \leftarrow 1$, the leftmost walk down the tree, and so on
for all other nodes of these input trees.
See \cite{S07b} for details.

In this case, \eqref{E:fibo_asymp} shows that the number $\phi$ can be interpreted as the asymptotic growth rate of the number of walks $a_i(\ell)$ as $\ell \to\infty$; that is, the `branching ratio' of the input tree from one level to the next.

\begin{figure}[h!]
\centerline{%
\includegraphics[width=0.7\textwidth]{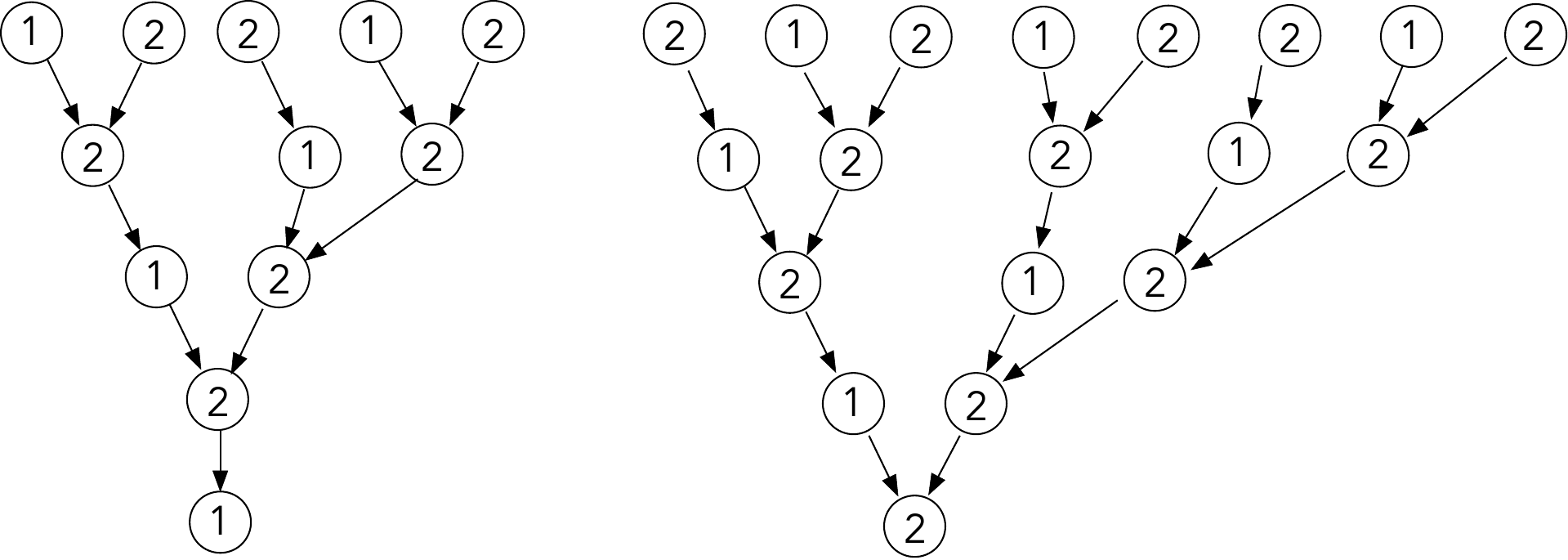}}
\caption{Input trees for the Fibonacci circuit, up to level $\ell= 4$.}
\label{F:fibo_input_trees}
\end{figure}

\end{example}

For this example the obvious way to define the branching ratio $\rho(i)$
of node $i$ (which motivates its name) is:
\begin{equation}
\label{E:BR_1}
\rho(i) = \lim_{\ell\to\infty} \frac{a_i(\ell+1)}{a_i(\ell)}.
\end{equation}
An alternative would be
\begin{equation}
\label{E:BR_2}
\rho(i) = \lim_{\ell\to\infty} (a_i(\ell))^{1/\ell}.
\end{equation}
For this network both limits exist and give the same result.
However, we show below that the limit in \eqref{E:BR_1} need not exist,
whereas the sequence of \eqref{E:BR_2} always converges. We therefore define the branching ratio
$\rho(i)$ for node $i$ by \eqref{E:BR_2}.

\subsection{Strongly Connected Components}

\begin{definition}\em
\label{D:strongly}
For a network $\GG$, we say that:

{\rm (a)} $\GG$ is {\em strongly connected} if, for any two distinct nodes
$i,j$, there is a  walk from $i$ to $j$. 
(Other terms are `path-connected' and `transitive'.)

{\rm (b)}
A {\em strongly connected component} ({\em SCC}) 
is a maximal set $X$ such that $\GG[X]$ is strongly connected.
\end{definition}

Every network $\GG$ decomposes uniquely into 
strongly connected components.
These components correspond to the nodes of the {\em condensation} of $\GG$, 
which has a (unique) edge between components if and only if there is an edge between 
some members of those components. The condensation is acyclic.

\subsection{Upstream Subnetwork}

The branching ratio of a given node can, of course, depend on the node.
The crucial feature that determines this dependence is:

\begin{definition}\em
\label{D:upstream_sub}
The {\em upstream subnetwork} $\UU(i)$ of node $i \in \VV$ is the
induced subnetwork $\GG[\XX_i]$, where $\XX_i$ is
the set of all vertices $j$ such that there is a walk from $j$ to $i$.
\end{definition}

As we shall see below (Proposition~\ref{P:upstream_properties}), it is always possible to (re)order the nodes of a multigraph $\GG$ in such a way that if $i$ and $j$ belong to two different SCCs, but there is a walk from $i$ to $j$, then $i<j$. (This is not possible within a single SCC unless it contains only one node.) Any such ordering is said to be \emph{compatible with the feedforward structure} of $\GG$. The adjacency matrix of $\GG$ with this order has a block upper triangular structure.

\begin{example}\em
\label{ex:upstream}

Fig. \ref{F:upstream} shows two networks, each composed of
a Fibonacci circuit and a single node with two self-loops.
These subnetworks are connected in a feedforward manner, 
in two different orders.

\begin{figure}[h!]
\centerline{%
\includegraphics[width=0.5\textwidth]{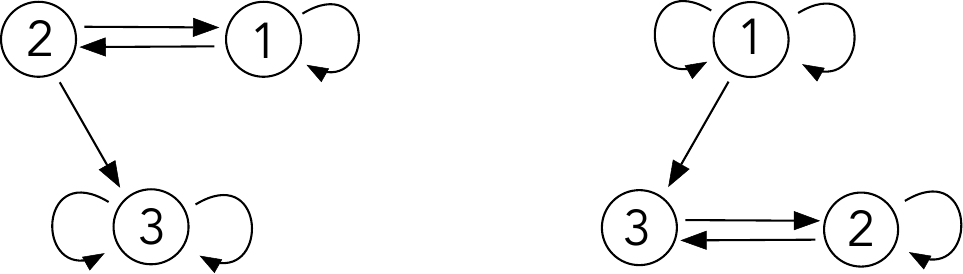}}
\caption{{\em Left}: Fibonacci circuit feeding forward to a single node with two
self-loops. {\em Right}: Single node with two
self-loops feeding forward to a Fibonacci circuit.}
\label{F:upstream}
\end{figure}

{\em Left Network}: The SCCs are $\{1,2\}$ and $\{3\}$. 
The natural ordering on $\{1,2,3\}$ is compatible with the feedforward structure.
As a consequence, the adjacency matrix is
\[
A = \left[
\begin{array}{cc|c}
1 & 1 & 0 \\
1 & 0 & 1 \\
\hline
0 & 0 & 2
\end{array}
\right]
\]
has a block lower triangular structure. 
The eigenvalues of $A$ are $\frac{1\pm\sqrt{5}}{2}$ and $2$.
The upstream subnetworks are
\[
\UU(1) = \UU(2) = \GG[\{1,2\}] \qquad  \UU(3) = \GG.
\]
The branching ratios are
\[
\rho(1)=\rho(2)= \frac{1+\sqrt{5}}{2} \qquad \rho(3) = 2.
\]

{\em Right Network}: The SCCs are $\{1\}$ and $\{2,3\}$. 
Again, the natural ordering on $\{1,2,3\}$ is compatible with the feedforward structure.
The adjacency matrix is
\[
A = \left[
\begin{array}{c|cc}
2 & 0 & 1 \\
\hline
0 & 1 & 1 \\
0 & 0 & 1
\end{array}
\right]
\]
with the stated block lower triangular structure. 
The eigenvalues of $A$ are again $\frac{1\pm\sqrt{5}}{2}$ and $2$.
The upstream subnetworks are
\[
\UU(1) = \GG[\{1\}] \qquad \UU(2) = \UU(3) = \GG.
\]
The branching ratios are easily seen to be
\[
\rho(1) = \rho(2)= \rho(3)= 2
\]
\end{example}

\noindent
In these examples:
\begin{itemize}
\item[\rm (a)] The branching ratio $\rho(i)$ can depend on $i$.
\item[\rm (b)] The branching ratio $\rho(i)$ need not equal the spectral radius of $\GG$.
\item[\rm (c)] The branching ratio $\rho(i)$ is the spectral radius of the upstream
subnetwork $\UU(i)$.
\end{itemize}
We will see that these features are typical of general networks.

We now state a lengthy list of properties of upstream subnetworks.
The proofs are straightforward and are omitted, but the results are
needed repeatedly in the proofs of the main theorems.

\begin{proposition}
\label{P:upstream_properties}
$ $

{\rm(1)} If $(x_0,e_1, x_1, e_2,x_2, \ldots, e_\ell, x_\ell)$ is a walk from $j=x_\ell$ to $i=x_0$
then all $x_t$ and $e_t$ are included in $\UU(i)$.

{\rm(2)} $\UU(i)$ consists precisely of the vertices and edges of all walks
that terminate at vertex $i$.

{\rm(3)} If $b_\ell(i)$ is the number of walks in $\UU(i)$ of length $\ell$ that terminate
at vertex $i$, then 
\[
b_\ell(i) = a_\ell(i) \quad \mbox{\rm for all}\ i, \ell.
\]

{\rm(4)} $\UU(i)$ is the subnetwork induced by the set of all the vertices of all SCCs of $\GG$ that are contained in $\UU(i)$.

{\rm(5)} Every SCC of $\UU(i)$ is an SCC of $\GG$.

{\rm(6)} If we arrange the SCCs of $\GG$ in a total order that is compatible
with the feedforward structure of the set of SCCs, and order nodes by blocks,
then $A$ is block upper triangular.

{\rm(7)} Consider an ordering of the nodes of $\GG$ respecting the previous statement, and let $B(i)$ be the adjacency matrix of $\UU(i)$. Then $B(i)$ consists of
those blocks in $A$ that correspond to rows and columns indexed by
the SCCs of $\UU(i)$. Therefore $B(i)$ inherits the block upper triangular
structure:
\begin{equation}
\label{E:block_B}
B(i) =  \Matrix{B_1 & \ast & \ast  & \cdots & \ast &\ast \\
	0 & B_2 & \ast  &  \cdots & \ast& \ast \\
	\vdots & \vdots  & \vdots &\ddots & \vdots& \vdots \\
	0 & 0 & 0  & \cdots & B_{p-1}& \ast\\
	 0 & 0 & 0 & \cdots & 0 & B_p
}
\end{equation}
for suitable $p$. The blocks $B_j$ depend on the choice of $i$.

{\rm(8)} The spectrum of $B(i)$ is the union of the spectra of the $B_j$ in \eqref{E:block_B}.

{\rm(9)} The spectrum of $B(i)$ is a subset of the spectrum of $A$.
\qed\end{proposition}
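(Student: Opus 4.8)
The plan is to isolate one structural fact about $\XX_i$ and obtain all nine items from it with routine arguments. The fact is that $\XX_i$ is \emph{predecessor-closed}: if there is a walk from $k$ to some $j \in \XX_i$ (in particular, if $k\to j$ is a single edge), then concatenating with a walk from $j$ to $i$ shows $k \in \XX_i$. Two consequences I would record immediately: any walk whose final vertex lies in $\XX_i$ has all of its vertices in $\XX_i$; and $\XX_i$ is a union of \emph{complete} SCCs of $\GG$, since every vertex in the SCC of a given $j \in \XX_i$ reaches $j$ and hence reaches $i$.

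Items (1)--(3) then go as follows. For (1), each prefix $(x_0, e_1, \dots, e_t, x_t)$ is a walk from $x_t$ to $i$, so every $x_t \in \XX_i$, and consequently every $e_t$, having both endpoints in $\XX_i$, lies in $\GG[\XX_i] = \UU(i)$. For (2): one inclusion is (1); conversely each $j \in \XX_i$ lies on a witnessing walk to $i$, and each edge $e:u\to v$ of $\GG[\XX_i]$ has $u,v\in\XX_i$, so prefixing $e$ to a walk from $v$ to $i$ exhibits $e$ on a walk terminating at $i$. For (3), by (1) every length-$\ell$ walk of $\GG$ ending at $i$ already lies inside $\UU(i)$, while conversely any walk of the subnetwork $\UU(i)$ is a walk of $\GG$; the two collections coincide, hence so do their sizes.

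For (4)--(9) I bring in the SCC structure. Item (4) is the second consequence above. For (5): if $Y\subseteq\XX_i$ and $a,b\in Y$, any walk from $a$ to $b$ in $\GG$ lies entirely in $\XX_i$ (its vertices reach $b$, hence $i$) and so is already a walk of $\UU(i)$; thus a subset of $\XX_i$ is strongly connected in $\GG$ iff it is so in $\UU(i)$, maximality transfers, and the SCCs of $\UU(i)$ are precisely the SCCs of $\GG$ lying in $\XX_i$. For (6) I invoke the acyclicity of the condensation (noted after Definition \ref{D:strongly}): fix a topological order of the condensation and order the nodes of $\GG$ block by block accordingly; every edge between two distinct SCCs then runs from an earlier block to a later one (a backward inter-SCC edge would merge those SCCs), which is precisely an ordering compatible with the feedforward structure and makes $A$ block upper triangular. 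Item (7) follows because, with this ordering, $B(i)$ is the principal submatrix of $A$ on the rows and columns indexed by the SCCs making up $\XX_i$ --- using (2) and (4) to identify edges of $\UU(i)$ with the corresponding edges of $\GG$ --- and deleting a set of block rows together with the matching block columns from a block upper triangular matrix leaves a block upper triangular matrix whose diagonal blocks $B_1,\dots,B_p$ form a subfamily of those of $A$. Item (8) is the standard identity $\det(B(i)-\lambda I)=\prod_j\det(B_j-\lambda I)$, so the spectrum of $B(i)$ is the union of the spectra of the $B_j$; the same identity applied to $A$, together with the fact that each $B_j$ is a diagonal block of $A$, gives (9).

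There is no genuinely hard step, which is why the authors omit the proofs; the only real care is bookkeeping. The point to watch is that the convenient facts --- edges of $\UU(i)$ having both endpoints in $\XX_i$, $B(i)$ being a principal submatrix of $A$, and its inherited block upper triangular shape --- all depend on $\XX_i$ being predecessor-closed and a union of complete SCCs, properties that fail for a general vertex set. I would therefore prove the two consequences of predecessor-closure first and refer back to them throughout.
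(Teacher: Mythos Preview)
The paper explicitly omits the proof of this proposition (``The proofs are straightforward and are omitted''), so there is nothing to compare against. Your argument is correct and is precisely the routine verification the authors had in mind: the key observation that $\XX_i$ is predecessor-closed, hence a union of complete SCCs of $\GG$, is exactly what drives all nine items, and your treatment of each is accurate, including the point in (7) that $\XX_i$ need not correspond to a \emph{contiguous} run of blocks but that deleting matching block rows and columns from a block upper triangular matrix preserves that shape.
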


Recall Definition \ref{D:spectrum} for the  Perron eigenvalue (or spectral radius):

\begin{corollary}
\label{C:smaller_P_ev}
The Perron eigenvalue $\rho(i)$ of $\UU(i)$ is less than or
equal to that of $\GG$. It can be less than that of $\GG$.
\qed\end{corollary}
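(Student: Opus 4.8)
The plan is to read the corollary off directly from part (9) of Proposition~\ref{P:upstream_properties}, which states that the spectrum of $B(i)$, the adjacency matrix of $\UU(i)$, is a subset of the spectrum of $A$. First I would recall Definition~\ref{D:spectrum}: the spectral radius of a matrix is the maximum modulus among its eigenvalues, and because both $A$ and $B(i)$ are non-negative matrices, this maximum is attained by a non-negative real eigenvalue — the Perron eigenvalue. Thus $\rho(i) = \max\{|\lambda| : \lambda \in \mathrm{spec}(B(i))\}$ and $\rho(\GG) = \max\{|\lambda| : \lambda \in \mathrm{spec}(A)\}$. Since a maximum over a subset cannot exceed the maximum over the whole set, the inclusion $\mathrm{spec}(B(i)) \subseteq \mathrm{spec}(A)$ forces $\rho(i) \le \rho(\GG)$ immediately.

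For the second assertion — that the inequality can be strict — I would simply point to a witness already in the paper. In the Left Network of Example~\ref{ex:upstream}, $\UU(1) = \GG[\{1,2\}]$ is the Fibonacci circuit, with Perron eigenvalue $\phi = \frac{1+\sqrt5}{2}$, whereas $\GG$ contains in addition the node $3$ carrying two self-loops, which contributes the eigenvalue $2$ to $\mathrm{spec}(A)$; hence $\rho(1) = \phi < 2 = \rho(\GG)$.

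I expect no real obstacle here: the substantive work has already been absorbed into Proposition~\ref{P:upstream_properties} (in particular the block upper triangular reduction and the resulting containment of spectra), and the corollary is just the remark that ``subset of spectra'' together with ``non-negativity'' yields ``no larger Perron eigenvalue.'' The one point deserving a sentence of care is that one must invoke non-negativity — via the Perron--Frobenius Theorem, as recorded in Definition~\ref{D:spectrum} — to know that each spectral radius is itself an element of the corresponding spectrum, so that comparing the two spectral radii genuinely amounts to comparing two numbers lying in $\mathrm{spec}(B(i)) \subseteq \mathrm{spec}(A)$.
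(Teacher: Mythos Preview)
Your proposal is correct and matches the paper's intent: the corollary is marked \qed\ with no proof given, precisely because it is immediate from Proposition~\ref{P:upstream_properties}(9) together with Definition~\ref{D:spectrum}, and Example~\ref{ex:upstream} already furnishes the witness for strict inequality. One small remark: the inequality $\rho(i)\le\rho(\GG)$ follows from the subset inclusion of spectra alone (max over a subset $\le$ max over the whole set), so invoking non-negativity to place the spectral radius inside the spectrum is not strictly needed for the comparison, though it does no harm.
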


\begin{proposition}
\label{P:same_P_ev}
If $i$ and $j$ lie in the same SCC of $\GG$ then the Perron
eigenvalues $\rho(i)$ and $\rho(j)$ are equal. 
\end{proposition}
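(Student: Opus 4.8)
The plan is to show that when $i$ and $j$ lie in the same SCC $X$ of $\GG$, their upstream subnetworks $\UU(i)$ and $\UU(j)$ are in fact equal, whence their adjacency matrices coincide and so do the Perron eigenvalues $\rho(i)$ and $\rho(j)$.

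First I would observe that $\UU(i)$ and $\UU(j)$ are each induced subnetworks, determined (by Definition \ref{D:upstream_sub}) by the vertex sets $\XX_i$ and $\XX_j$ of all vertices reaching $i$, resp. $j$; so it suffices to prove $\XX_i = \XX_j$. By symmetry it is enough to show $\XX_i \subseteq \XX_j$. Take any vertex $k \in \XX_i$, so there is a walk from $k$ to $i$. Since $i$ and $j$ lie in the same SCC, there is a walk from $i$ to $j$ (here one must handle the trivial case $i=j$ separately, where there is nothing to prove, and otherwise invoke Definition \ref{D:strongly}(a) applied to $\GG[X]$, hence also in $\GG$). Concatenating the walk $k \to i$ with the walk $i \to j$ produces a walk from $k$ to $j$, so $k \in \XX_j$. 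Reversing the roles of $i$ and $j$ gives the reverse inclusion, so $\XX_i = \XX_j$ and therefore $\UU(i) = \GG[\XX_i] = \GG[\XX_j] = \UU(j)$.

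Once the two upstream subnetworks are identified, their adjacency matrices are literally the same matrix (for any fixed ordering of the vertices of the common vertex set), so they have the same spectrum and in particular the same spectral radius. Since $\rho(i)$ is defined as the Perron eigenvalue of $\UU(i)$ and $\rho(j)$ as that of $\UU(j)$, we conclude $\rho(i) = \rho(j)$.

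There is essentially no obstacle here: the only point requiring a moment's care is the concatenation-of-walks step — making sure that walks compose (the target of the first agreeing with the source of the second, and lengths adding) and that the trivial length-zero case $i = j$ is not overlooked. Everything else is immediate from the definitions and from Proposition \ref{P:upstream_properties}. (Alternatively, one could route the argument through Proposition \ref{P:upstream_properties}(4): $i$ and $j$ belong to the same SCC, and an SCC $Y$ is contained in $\UU(i)$ iff some — equivalently every — vertex of $Y$ reaches $i$; since reaching $i$ and reaching $j$ are equivalent for vertices in the strongly connected $X$, the two upstream subnetworks contain exactly the same SCCs, hence are equal. This is the same proof dressed in the language of the condensation.)
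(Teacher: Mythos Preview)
Your proof is correct and follows exactly the paper's approach: the paper's proof simply asserts that $\UU(i)=\UU(j)$ and concludes, while you supply the (straightforward) verification of that equality via $\XX_i=\XX_j$. There is nothing to add.
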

\begin{proof}
The upstream subnetworks $\UU(i)$ and $\UU(j)$ are equal, and
the Perron
eigenvalues $\rho(i)$ and $\rho(j)$ are equal to the maximal eigenvalue of
this common upstream subnetwork.
\end{proof}

\subsection{Number of Walks of Given Length}

The results here are standard: see for example \cite{AO,B73, BH90}, but
they are also vital to the analysis of the branching ratio since they relate it
to a suitable adjacency matrix.

Let $\GG$ be a network with nodes $\{1,2,\ldots,n\}$. Let
$A=(a_{ij})$ be the adjacency matrix of $\GG$. Then column $j$ of $A$
lists the numbers of edges from nodes $1,2,\ldots, n$ to $j$ ({\em input edges}).
The total number of input edges to node $j$ is therefore equal to
\begin{equation}
\label{E:total_to_j}
a_{1j}+a_{2j}+ \cdots + a_{nj}.
\end{equation}
Let ${\bf u} = (1,1,\ldots,1)$, considered as a row vector, that is, the $1 \times n$ matrix
\[
[1\ 1\ 1\ \ldots 1]
\]
and recall Definition \ref{D:number_of_walks}:
the number of walks of length $\ell$ terminating at $i$ is denoted by $a_i(\ell)$.
Well known ideas lead to a formula for $a_i(\ell)$ that is fundamental in
what follows:

\begin{lemma}
\label{L:in_sum}
$ $

{\rm (a)}
The total number $a_i(1)$ of input edges (equivalently, walks of length $1$) to node $i$ is $({\bf u} A)_i$.

{\rm (b)}
The total number of input walks of length $\ell$ to node $i$ is $a_i(\ell)=({\bf u} A^\ell)_i$.
\end{lemma}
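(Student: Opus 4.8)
The plan is to prove part (b) by induction on $\ell$, with part (a) serving as the base case $\ell=1$ (and the case $\ell=0$ being trivial, since $A^0 = I$ and $\mathbf{u}I = \mathbf{u}$, correctly recording the single walk of length $0$ at each node). For the base case, I would argue directly from the definitions: a walk of length $1$ terminating at $i$ is a single edge $e_1$ with $t(e_1)=i$; the number of such edges with source $k$ is $a_{ki}$ by Definition \ref{D:adjacency}, so summing over all possible sources $k$ gives $\sum_{k=1}^n a_{ki} = (\mathbf{u}A)_i$, which is exactly \eqref{E:total_to_j} read down column $i$.

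For the inductive step, suppose $a_i(\ell) = (\mathbf{u}A^\ell)_i$ for every node $i$. A walk of length $\ell+1$ terminating at $i$ has the form $(x_0,e_1,\ldots,e_\ell,x_\ell,e_{\ell+1},x_{\ell+1})$ with $x_0 = i$; I would split it at the node $x_1$. The initial segment $(x_0,e_1,x_1)$ is a walk of length $1$ from $x_1$ to $i$ (there are $a_{x_1 i}$ choices of $e_1$), and the remaining segment $(x_1,e_2,\ldots,e_{\ell+1},x_{\ell+1})$ is a walk of length $\ell$ terminating at $x_1$. Since every walk of length $\ell+1$ ending at $i$ decomposes uniquely this way, and conversely any edge into $i$ followed by any length-$\ell$ walk ending at its source yields such a walk,
\[
a_i(\ell+1) = \sum_{k=1}^n a_{ki}\, a_k(\ell).
\]
By the inductive hypothesis $a_k(\ell) = (\mathbf{u}A^\ell)_k$, so $a_i(\ell+1) = \sum_k (\mathbf{u}A^\ell)_k a_{ki} = \big((\mathbf{u}A^\ell)A\big)_i = (\mathbf{u}A^{\ell+1})_i$, which completes the induction.

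I do not anticipate a genuine obstacle here: the result is entirely standard and the only point requiring a little care is the bookkeeping of the decomposition — making sure the split is at the correct end of the walk given the paper's convention of numbering walks backwards from the target (Definition \ref{D:walk}), and confirming that the matrix product lands the index in the right slot given that $A_{ki}$ counts edges \emph{from} $k$ \emph{to} $i$ and we are working with row vectors acting on the left. One could alternatively split the walk at $x_\ell$ (peeling off the last edge), which gives the recurrence $a_i(\ell+1) = \sum_k a_{ik}'\cdots$ with a transpose appearing; splitting at $x_1$ as above is cleaner and matches the $\mathbf{u}A^\ell$ form directly. I would present the argument in the split-at-$x_1$ form and remark that it is just the standard fact that the $(j,i)$ entry of $A^\ell$ counts walks of length $\ell$ from $j$ to $i$, pre-multiplied by $\mathbf{u}$ to sum over all starting nodes $j$.
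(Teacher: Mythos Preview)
Your proposal is correct and takes essentially the same approach as the paper: part (a) by directly computing $(\mathbf{u}A)_i$ as the column sum \eqref{E:total_to_j}, and part (b) by induction on $\ell$. The paper's own proof is in fact terser than yours---it simply states that (b) ``follows from (a) by an easy induction on $\ell$'' and cites \cite{B73,AO}---so you have spelled out exactly the details the paper leaves implicit, including the careful bookkeeping with the backward walk convention.
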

\begin{proof}
Part (a):
\[
({\bf u} A) = [a_{11}+\cdots +a_{n1}, a_{12}+\cdots +a_{n2}, \ldots, a_{1n}+\cdots +a_{nn}],
\]
whence the result is obtained using \eqref{E:total_to_j}.

Part (b): 
This follows  from (a) by an easy  induction on $\ell$. See \cite[Section 4.5 Corollary 1]{B73} or \cite[Section 1.3]{AO}.
\end{proof}

\subsection{Reducible and Irreducible Matrices}

\begin{definition}\em
\label{D:irreducible}
A  square (real) matrix $M$
is {\em irreducible} if it is nonzero and either of the following equivalent properties
 holds:

(a) There does not exist a permutation matrix $P$ such that
\begin{equation}
\label{E:reducible}
PMP^{-1} = \Matrix{R & S \\ 0 & T}
\end{equation}

(b) The directed graph $\GG$ with adjacency matrix $M$ (that is, $\GG$ has an
edge of multiplicity $m_{ij}$ from node $i$ to node $j$) is strongly connected.
\end{definition}

A {\em reducible} matrix is one that is not irreducible. 
If \eqref{E:reducible} holds for some permutation matrix $P$, the matrix $M$ is reducible.

By induction,
for any matrix $M$ there exists a permutation matrix $P$ such that 
$PMP^{-1}$ is block-triangular with irreducible blocks on the diagonal: looking at the graph $\GG$ with adjacency matrix $M$, this permutation $P$ is one that orders the nodes of $\GG$ in a way that is compatible with the feedforward structure of the graph, and the blocks on the diagonal correspond to the adjacency matrices of the SCCs of $\GG$.

\section{Definition of Branching Ratio}
\label{S:BR}

In this section we motivate a definition of the branching ratio that
applies to any network.

In simple examples, such as Example \ref{ex:fibo} below, the sequence $a_i(\ell)$ 
approximates a geometric progression,
and the branching ratio can be defined as the common ratio of this progression,
which we denote by  $\rho(i)$. Specifically,
\begin{equation}
\label{E:ratio}
\rho(i) = \lim_{\ell \to \infty} \frac{a_i(\ell+1)}{a_i(\ell)}.
\end{equation}
This definition suffices if the limit exists, but
there are simple examples in which it does not, such as 
Example \ref{no_ratio} below. 
Nevertheless, these networks still have a well-defined branching ratio. 
We give a more general definition of the branching ratio
that applies to any network, prove that it always exists and
is equal to the maximal eigenvalue of the adjacency matrix
for the upstream subgraph of the node concerned.
The analysis relies on the 
Perron--Frobenius Theorem, which implies that such an eigenvalue exists
and is real. This characterisation reduces the computation of the
branching ratio to (numerical) linear algebra.

To dispose of (essentially trivial) exceptional cases, we need
the next proposition, which is well known and easy to prove:
\begin{proposition}
\label{P:acyclic} 
Let $\GG$ be a network with adjacency matrix $A$.
The following properties are equivalent:

{\rm (a)}: $\GG$ is acyclic (or feedforward).

{\rm (b)}: $A$ is nilpotent.

{\rm (c)}: All eigenvalues of $A$ are zero.

{\rm (d)}: Every input tree of $\GG$ is finite.
\qed\end{proposition}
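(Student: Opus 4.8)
\textbf{Proof proposal for Proposition \ref{P:acyclic}.}

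The plan is to prove the cycle of implications (a) $\Rightarrow$ (b) $\Rightarrow$ (c) $\Rightarrow$ (a), together with the equivalence (a) $\Leftrightarrow$ (d), using Lemma \ref{L:in_sum} and elementary facts about nilpotent matrices. The key observation tying everything together is that the $(i,j)$ entry of $A^\ell$ counts walks of length $\ell$ from node $i$ to node $j$ (a routine extension of Lemma \ref{L:in_sum}): so $A^\ell = 0$ for some $\ell$ precisely when $\GG$ has no arbitrarily long walks.

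First I would show (a) $\Rightarrow$ (b). If $\GG$ is acyclic then no walk can repeat a vertex, since a repeated vertex would produce a directed cycle; hence every walk has length at most $n-1$, where $n = |\VV|$. Therefore $A^n = 0$ (its entries count walks of length $n$, of which there are none), so $A$ is nilpotent. Next, (b) $\Rightarrow$ (c) is standard linear algebra: if $A^m = 0$ then any eigenvalue $\lambda$ of $A$ satisfies $\lambda^m = 0$, so $\lambda = 0$. For (c) $\Rightarrow$ (a), I would argue contrapositively: if $\GG$ contains a directed cycle, say of length $k$ through a vertex $i$, then there is a closed walk of length $rk$ at $i$ for every $r \geq 1$, so $(A^{rk})_{ii} \geq 1$ and in particular $\tr(A^{rk}) \geq 1$ for all $r$; since $\tr(A^m) = \sum_\lambda \lambda^m$ over the eigenvalues (with multiplicity), not all eigenvalues can be zero. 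Alternatively, non-nilpotence of $A$ follows directly and then (c) is contradicted as above; either route is short.

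For the equivalence with (d): if $\GG$ is acyclic, every walk terminating at a given node $i$ has length at most $n-1$, so the input tree of $i$ has finitely many nodes (at most $1 + a_i(1) + \cdots + a_i(n-1)$, each a nonnegative integer). Conversely, if some input tree is finite then in particular $a_i(\ell) = 0$ for all sufficiently large $\ell$; but if $\GG$ had a directed cycle, then (as above) there would be closed walks of unbounded length through some vertex on that cycle, and appending a fixed walk from that vertex to $i$ (if one exists) would give walks terminating at $i$ of unbounded length — and the input tree containing that vertex on the cycle would be infinite. The cleanest phrasing is: (d) fails for some node iff some input tree is infinite iff $\GG$ contains a directed cycle reachable backward from that node iff $\GG$ is not acyclic (using that if $\GG$ has a cycle, the input tree of any node on the cycle is already infinite).

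I do not expect any genuine obstacle here; the proposition is elementary and the authors themselves flag it as ``well known and easy to prove.'' The only point requiring a little care is making the walk-counting interpretation of $A^\ell$ precise for off-diagonal entries (Lemma \ref{L:in_sum} as stated handles only the sum over starting nodes), and being careful in the (d) direction to locate the infinite input tree correctly — it is the input tree of a node \emph{on} a cycle that is visibly infinite, so the contrapositive argument should start from there rather than from an arbitrary node $i$.
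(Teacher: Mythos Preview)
Your proposal is correct and entirely standard. The paper itself omits the proof, stating only that the result ``is well known and easy to prove'' and marking it with \qed; your argument is exactly the kind of routine verification the authors have in mind, and there is nothing further to compare.
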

Acyclic networks are therefore those networks for which
$a_i(\ell) = 0$ for sufficiently large $\ell$ and for all nodes $i$. In this case \eqref{E:ratio}
makes no sense, but by convention we define the
branching ratio to be zero. 

The general definition of the branching ratio proposed here,
which initially we distinguish from $\rho(i)$ by calling it $\delta(i)$, is:

\begin{definition}\em
\label{D:branching2} 
Let $\GG$ be a network and $i$ one of its nodes. The {\em branching ratio
 $\delta(i)$ at node $i$} is defined by:
\begin{equation}
\label{E:d}
\delta(i) =  \lim_{\ell \to \infty} (a_i(\ell))^{1/\ell}.
\end{equation}
\end{definition}

The first main result of this paper is a proof that the limit in \eqref{E:d}
always exists. If $\UU(i)$ is acyclic, the limit is $0$ since $a_i(\ell) = 0$
for sufficiently large $\ell$.

Definition \ref{D:branching2} is motivated by an obvious upper bound on
$a_i(\ell)$, stated and proved in
Lemma \ref{L:upper_bound}, combined with the intuition that 
the key case for a suitable lower bound is that of a nontrivial
strongly connected network.
We may assume that the network is
not acyclic since \eqref{E:d} gives a branching ratio of $0$; this is
also the Perron eigenvalue by Proposition \ref{P:acyclic}.
In this case
there plausibly exist constants $C, \delta > 0$ such that
\begin{equation}
\label{E:branch_eq}
a_i(\ell) \sim C \delta^\ell \qquad \mbox{as}\ \ell \to \infty.
\end{equation}
This immediately implies \eqref{E:d}.
The same result holds if we replace $C$ by an eventually 
positive polynomial in $\ell$.

\begin{definition}\em
\label{D:perron_ev}
Let $\GG$ be a graph. We use $\rho(G)$ to denote the Perron eigenvalue of $\GG$. 
If $i$ is a node of $\GG$, we write $\rho(i)$ for the Perron eigenvalue of the upstream subgraph $\UU(i)$.
\end{definition}

Our second main result is that $\delta(i) = \rho(i)$ for any network. The next few sections set up necessary background and
give the proof.

\subsection{Bounds}

As remarked, there are three ways to think about branching ratios:
bounds, topology, and eigenvalues. We begin with observations
about suitable bounds on $a_i(\ell)$, which are motivated
by examples in Section \ref{S:ME} and later calculations.

\begin{definition}\em
\label{D:eventually}
A real polynomial $P(\ell)$ in a variable $\ell$ is {\em eventually positive}
if there exists $L$ such that $P(\ell)>0$ for all $\ell \geq L$.

Equivalently, $P(\ell)$ is eventually positive if the coefficient of the highest power of $\ell$ is positive.
\end{definition}

It is convenient to work with upper and lower bounds on $a_i(\ell)$
of the following form.

\begin{lemma}
\label{L:bounds}
Let $\GG$ be a graph and let $i$ be a node of $\GG$. Suppose that 
\begin{eqnarray}
\label{e:lower_bound} a_i(\ell) &\geq& Q(\ell)\delta^\ell\ \mbox{for sufficiently large}\ \ell \\
\label{e:upper_bound} a_i(\ell) &\leq& P(\ell)\delta^\ell\ \mbox{for sufficiently large}\ \ell 
\end{eqnarray}
where $\delta > 0$ and $P,Q$ are eventually positive polynomials. Then 
the limit 
\eqref{E:d} exists and is equal to $\delta$. That is, node $i$ has branching ratio $\delta$.
\end{lemma}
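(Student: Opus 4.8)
The plan is to take logarithms and squeeze. Since $\delta > 0$ and, for $\ell$ large, $a_i(\ell)$ is sandwiched between two eventually positive polynomials times $\delta^\ell$, I would first fix $L$ large enough that $P(\ell), Q(\ell) > 0$ and both inequalities \eqref{e:lower_bound} and \eqref{e:upper_bound} hold for all $\ell \geq L$. In particular $a_i(\ell) > 0$ for $\ell \geq L$, so $(a_i(\ell))^{1/\ell}$ is well defined and positive there, and taking the (natural) logarithm of \eqref{e:lower_bound} and \eqref{e:upper_bound} gives
\[
\frac{\log Q(\ell)}{\ell} + \log \delta \;\leq\; \frac{\log a_i(\ell)}{\ell} \;\leq\; \frac{\log P(\ell)}{\ell} + \log \delta
\]
for all $\ell \geq L$.

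The key observation is then that for any eventually positive polynomial $R(\ell)$, one has $\frac{\log R(\ell)}{\ell} \to 0$ as $\ell \to \infty$: writing $R(\ell) = c\,\ell^d(1 + o(1))$ with $c > 0$, we get $\log R(\ell) = \log c + d \log \ell + o(1)$, which is $o(\ell)$. Applying this to $P$ and to $Q$, both the left-hand and right-hand sides of the displayed inequality tend to $\log \delta$, so by the squeeze theorem $\frac{\log a_i(\ell)}{\ell} \to \log \delta$, hence $(a_i(\ell))^{1/\ell} = \exp\!\big(\tfrac{\log a_i(\ell)}{\ell}\big) \to \delta$ by continuity of $\exp$. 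This is exactly the assertion that the limit in \eqref{E:d} exists and equals $\delta$, i.e.\ that node $i$ has branching ratio $\delta$.

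There is no real obstacle here; the statement is essentially a packaging lemma. The only point requiring a word of care is the very beginning: one must record that the hypotheses force $a_i(\ell) > 0$ for all sufficiently large $\ell$, so that the logarithm (equivalently, the $\ell$-th root of a positive number) is legitimate and the limit in \eqref{E:d} is being taken over a sequence that is ultimately positive. (For small $\ell$ the value $a_i(\ell)$ is irrelevant to the limit.) Everything else is the standard "$\log$ of polynomial is $o(\ell)$" estimate together with the squeeze theorem, so I would keep the write-up to a few lines.
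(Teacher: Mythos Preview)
Your proof is correct and essentially the same as the paper's: both squeeze $(a_i(\ell))^{1/\ell}$ between $(Q(\ell))^{1/\ell}\delta$ and $(P(\ell))^{1/\ell}\delta$ and use that an eventually positive polynomial $R$ satisfies $R(\ell)^{1/\ell}\to 1$ (equivalently, $\tfrac{\log R(\ell)}{\ell}\to 0$). The only difference is cosmetic---you pass through logarithms while the paper takes $\ell$-th roots directly---and your extra remark that $a_i(\ell)>0$ for large $\ell$ is a welcome clarification the paper leaves implicit.
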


\begin{proof}
Since $P$ and $Q$ are eventually positive,  for  sufficiently large $\ell$ we may take ($1/\ell$)-th powers of \eqref{e:lower_bound} and \eqref{e:upper_bound}:
\[
(Q(\ell))^{1/\ell}  \delta \leq (a_i(\ell))^{1/\ell} \leq (P(\ell))^{1/\ell}  \delta.
\]
If $\ell \to \infty$, then $(Q(\ell))^{1/\ell}$ and $(P(\ell))^{1/\ell}$ both tend to $1$, so
$(a_i(\ell))^{1/\ell}$ tends to $\delta$.
\end{proof}

From now on, 
when considering complex eigenvalues, we
work in $\C^n$ when convenient and take real parts to
reduce to $\R^n$ when necessary.
Recall that the {\em generalised eigenspace} $E_\lambda \subseteq \C^n$ of
an eigenvalue $\lambda$ of an $n \times n$  matrix $M$ is
\[
E_\lambda = \{ \vc v \in \C^n : \vc v (M-\lambda I)^n = 0 \}
\]

\begin{lemma}
\label{L:main_formula}
Let $B(i)$ be the adjacency matrix of the upstream subnetwork $\UU(i)$
of vertex $i$, and let $\lambda$ run through the eigenvalues of $B(i)$. 
Then there exist polynomials $P_{i\lambda}(\ell)$ such that
\begin{equation}
\label{E:main_formula}
a_i(\ell) = (\vc u B(i)^\ell)_i = \sum_{\lambda} P_{i\lambda}(\ell) \lambda^\ell.
\end{equation}
\end{lemma}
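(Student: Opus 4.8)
The plan is to derive \eqref{E:main_formula} from the Jordan canonical form of $B(i)$, combined with the walk-counting formula $a_i(\ell) = (\vc u B(i)^\ell)_i$ from Lemma \ref{L:in_sum}(b) (applied to $\UU(i)$, using Proposition \ref{P:upstream_properties}(3) to identify walk counts in $\GG$ and in $\UU(i)$). First I would write $B = B(i)$ and pass to $\C$, choosing an invertible $S$ such that $S^{-1} B S = J$, where $J = \bigoplus_k J_k$ is block diagonal with Jordan blocks $J_k$, each $J_k$ associated to some eigenvalue $\lambda$ and of the form $\lambda I + N$ with $N$ the nilpotent shift. Then $B^\ell = S J^\ell S^{-1}$, so $(\vc u B^\ell)_i = \vc u S J^\ell S^{-1} \vc e_i^{\TT}$, and it suffices to understand the entries of $J^\ell$.

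The key computation is that for a single $m \times m$ Jordan block $J_k = \lambda I + N$ with $\lambda \neq 0$, the binomial theorem gives $J_k^\ell = \sum_{r=0}^{m-1} \binom{\ell}{r} \lambda^{\ell - r} N^r$, since $N^m = 0$; thus every entry of $J_k^\ell$ is of the form $\binom{\ell}{r}\lambda^{\ell-r}$ (up to the fixed scalar $\lambda^{-r}$, absorbed into the polynomial coefficient), which is $\lambda^\ell$ times a polynomial in $\ell$ of degree at most $m-1$. For a Jordan block with $\lambda = 0$, $J_k^\ell = N^\ell$ vanishes once $\ell \geq m$, so it contributes nothing for large $\ell$; one can either fold this into the $\lambda = 0$ term with the convention that $P_{i,0}(\ell)\cdot 0^\ell$ is eventually zero, or simply note the formula holds for $\ell$ large (which is all that is used downstream). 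Consequently each entry of $J^\ell$ — hence, by linearity, the scalar $(\vc u B^\ell)_i$ — is a finite $\C$-linear combination of terms $\binom{\ell}{r}\lambda^{\ell-r}$ over the eigenvalues $\lambda$ of $B$, which regroups as $\sum_\lambda P_{i\lambda}(\ell)\lambda^\ell$ for suitable polynomials $P_{i\lambda}$.

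Finally I would note that although $S$, $J$, and the intermediate $P_{i\lambda}$ may be complex, the left-hand side $a_i(\ell) = (\vc u B^\ell)_i$ is a non-negative integer for every $\ell$; hence the imaginary part of $\sum_\lambda P_{i\lambda}(\ell)\lambda^\ell$ vanishes for all $\ell$, and since non-real eigenvalues of the real matrix $B$ come in conjugate pairs one may pair $P_{i\lambda}$ with $\overline{P_{i\lambda}}$ at $\overline\lambda$ to keep the statement as written. The main obstacle is essentially bookkeeping rather than conceptual: one must be careful about the $\lambda = 0$ eigenvalue (where $0^\ell$ is only eventually meaningful) and about the degree bound on $P_{i\lambda}$, which is governed by the largest Jordan block for $\lambda$, not merely its algebraic multiplicity — but for the statement of Lemma \ref{L:main_formula}, which asserts only the existence of such polynomials, no degree bound is needed, so the argument is a direct unwinding of Jordan form.
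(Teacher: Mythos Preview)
Your proposal is correct and follows essentially the same route as the paper: both arguments reduce to the Jordan structure of $B(i)$ and apply the binomial theorem to $(\lambda I + N)^\ell$ to extract the polynomial coefficients in front of $\lambda^\ell$. The paper phrases this via the generalised eigenspace decomposition $\vc u = \sum_\lambda \vc u^\lambda$ and the Jordan--Chevalley split $B|_{E_\lambda} = \lambda I + N_\lambda$, whereas you conjugate to full Jordan canonical form; these are equivalent packagings of the same computation, and your explicit treatment of the $\lambda=0$ block is arguably a small improvement, since the paper's formula involving $\lambda^{-p}$ tacitly assumes $\lambda\neq 0$.
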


\begin{proof}
By Proposition \ref{P:upstream_properties} (3) and (7),
we can replace $\GG$ by $\UU(i)$ without changing the value of $a_i(\ell)$;
recall that $\UU(i)$ is the smallest subnetwork containing all walks terminating at $i$,
and we see later that it gives the correct value for the branching ratio.
By property (7) we can then restrict/project $A$ onto the adjacency
matrix $B(i)$ of $\UU(i)$. We then work with $B(i)$, but to simplify
notation we call it $B$, keeping in mind that $B$ depends on $i$.

Let  $E_\lambda$ be the generalised eigenspace of an eigenvalue $\lambda$ of 
$B$, and let $m$ be the number of vertices in $\UU(i)$.
The matrix $B$ has $m$ linearly independent generalised eigenvectors~\cite{B70}, so
\[
\C^m = \bigoplus_\lambda E_\lambda
\]
where here and below sums with variable $\lambda$ range over the eigenvalues of $B$. Decompose $\vc u = [1\ 1\ \ldots 1]$ according to this direct sum, obtaining
\[
\vc u = \sum_\lambda \vc u^\lambda.
\]
By definition, $\vc u^\lambda \cdot (B-\lambda I)^n = 0$ for all $\lambda$.
Let $\delta$ denote the Perron eigenvalue of $B$.
We have 
\begin{equation}
\label{E:a_i_sum}
a_i(\ell) = (\vc u\cdot B^\ell)_i = \left( \sum_\lambda \vc u^\lambda\cdot B^\ell \right)_i
= \sum_\lambda \left(\vc u^\lambda\cdot B^\ell \right)_i.
\end{equation}

We claim that for each $\lambda$ there exists a polynomial $P_\lambda(\ell)$ over $\R$
(which clearly must be eventually positive) such that 
\begin{equation}
\label{E:u_i_bound}
(\vc u^\lambda\cdot B^\ell)_i \leq P_\lambda(\ell) {| \lambda |}^\ell.
\end{equation}

To prove this claim, let the dimension of $E_\lambda$ be $m_\lambda$, and  
let $B_\lambda = B|_{E_\lambda}$.
By the Jordan-Chevalley decomposition \cite{H72}, 
\[
B_\lambda =  K_\lambda + L_\lambda
\]
where $K_\lambda$ is semisimple (diagonalisable) and $L_\lambda$ is nilpotent. 
Indeed, $K_\lambda = \lambda I_\lambda$, where $I_\lambda$
is the identity on $E_\lambda$, as we now show: by the Jordan decomposition,
there is a basis-change matrix $R$ such that
\[
RB_\lambda R^{-1} =  \lambda I_\lambda + M_\lambda
\]
wher $M_\lambda$ is nilpotent; so
\[
B_\lambda =  R^{-1}(\lambda I_\lambda  + M_\lambda)R =  \lambda I_\lambda + R^{-1}M_\lambda R
\]
where $R^{-1}M_\lambda R$ is nilpotent. Let $N_\lambda = R^{-1}M_\lambda R$, so 
$B_\lambda = \lambda I_\lambda + N_\lambda$. Since $B$ leaves
$E_\lambda$ invariant and $B_\lambda = B|_{E_\lambda}$,
\[
( {\bf u}^\lambda  B^\ell)_i = ( {\bf u}^\lambda  B_\lambda^\ell)_i = 
( {\bf u}^\lambda  (\lambda I_\lambda + N_\lambda)^\ell)_i .
\] 
Since $I$ and $N_\lambda$ commute, recalling that $m_\lambda$ is the dimension of $E_\lambda$ (hence $N_\lambda^{q}=0$ for all $q \geq m_\lambda$), we can expand by the binomial theorem 
to get
\beqn
( {\bf u}^\lambda  B^\ell)_i &=&  \left( {\bf u}^\lambda  \left(\lambda^\ell I_\lambda
	+\sum_{p=1}^{m_\lambda} \Matrixc{\ell\\p}\lambda^{\ell-p}N_\lambda^p  \right)\right) _i \\
	&=& \lambda^\ell \left({\bf u}^\lambda\right)_i
	+  \sum_{p=1}^{m_\lambda} \Matrixc{\ell\\p}\lambda^{\ell-p}\left({\bf u}^\lambda N_\lambda^p\right)_i \\
	&=& \lambda^\ell \left( \sum_{p=0}^{m_\lambda} c_p \lambda^{-p} \Matrixc{\ell\\p} \right),
\eeqn
where
\[
c_0 = \left({\bf u}^\lambda\right)_i
\]
and, for every $p=1,\dots,m_\lambda$,
\[
c_p = ({\bf u}^\lambda N_\lambda^p)_i\in \C.
\]
Let 
\[
P_{i \lambda}(\ell) =  \sum_{p=0}^{m_\lambda} c_p \lambda^{-p}\Matrixc{\ell\\p}.
\]
This is a polynomial over $\C$, and \eqref{E:main_formula} holds.
\end{proof}

\begin{lemma}
\label{L:upper_bound}
An upper bound \eqref{e:upper_bound} exists for any $i$, where $\delta$
equals the Perron eigenvalue of $\UU(i)$.
\end{lemma}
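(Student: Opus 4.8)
The plan is to read the required bound directly off Lemma~\ref{L:main_formula}, taking moduli term by term and then using the defining property of the spectral radius. Write $B=B(i)$, $\rho=\rho(i)$, and let $m$ be the number of vertices of $\UU(i)$. By Lemma~\ref{L:main_formula} and its proof,
\[
a_i(\ell)\;=\;\sum_{\lambda}\bigl(\vc u^\lambda B^\ell\bigr)_i ,
\]
the sum ranging over the eigenvalues $\lambda$ of $B$, and for each $\lambda\neq 0$ the proof of that lemma supplies the explicit expression $\bigl(\vc u^\lambda B^\ell\bigr)_i=\lambda^\ell\sum_{p=0}^{m_\lambda}c_p\lambda^{-p}\binom{\ell}{p}$ with $c_p\in\C$ (depending on $\lambda$). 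The eigenvalue $\lambda=0$, if present, contributes $\bigl(\vc u^0 N_0^\ell\bigr)_i$, which vanishes once $\ell\geq m$ since $N_0$ is nilpotent of index at most $m_0\leq m$; so for $\ell\geq m$ only the nonzero eigenvalues matter and the factors $\lambda^{-p}$ are harmless.

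Next I would take absolute values. Since $a_i(\ell)$ is a non-negative integer, for $\ell\geq m$
\[
a_i(\ell)=|a_i(\ell)|\;\leq\;\sum_{\lambda\neq 0}\bigl|(\vc u^\lambda B^\ell)_i\bigr|\;\leq\;\sum_{\lambda\neq 0}|\lambda|^\ell\,Q_\lambda(\ell),
\qquad Q_\lambda(\ell):=\sum_{p=0}^{m_\lambda}|c_p|\,|\lambda|^{-p}\binom{\ell}{p},
\]
using the triangle inequality together with $\binom{\ell}{p}\geq 0$ for integers $\ell\geq 0$. Each $Q_\lambda$ is a polynomial in $\ell$ that is non-negative at every non-negative integer, and (unless $Q_\lambda\equiv 0$) its leading coefficient $|c_{d}|\,|\lambda|^{-d}/d!$ is positive, where $d=\max\{p:c_p\neq 0\}$.

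Now I invoke the spectral-radius property: by Definition~\ref{D:spectrum}, $\rho=\rho(i)$ is the largest modulus among the eigenvalues of $B$, so $|\lambda|\leq\rho$ for every eigenvalue $\lambda$, hence $|\lambda|^\ell\leq\rho^\ell$ for $\ell\geq 0$. Since $Q_\lambda(\ell)\geq 0$, the previous display gives $a_i(\ell)\leq\rho^\ell\sum_{\lambda\neq 0}Q_\lambda(\ell)$ for all $\ell\geq m$. Set $P(\ell):=1+\sum_{\lambda\neq 0}Q_\lambda(\ell)$. The leading coefficient of $P$ is positive — it is the sum of the positive leading coefficients of the $Q_\lambda$ of maximal degree, or $1$ if all $Q_\lambda$ are constant — so $P$ is an eventually positive polynomial in the sense of Definition~\ref{D:eventually}; and because $\rho^\ell\geq 0$ we obtain $a_i(\ell)\leq P(\ell)\rho^\ell$ for $\ell\geq m$, which is exactly \eqref{e:upper_bound} with $\delta=\rho(i)$. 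When $\rho(i)=0$ (equivalently $\UU(i)$ acyclic) both sides are $0$ for $\ell\geq m$, so the inequality holds trivially and is subsumed in the argument.

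I do not expect a real obstacle here: this is the easy half of the eventual two-sided comparison $Q(\ell)\rho(i)^\ell\leq a_i(\ell)\leq P(\ell)\rho(i)^\ell$, and the possible cancellation among dominant terms flagged in the Introduction is an issue only for the matching \emph{lower} bound (Lemma~\ref{L:bounds}'s hypothesis \eqref{e:lower_bound}), not for an upper bound obtained by the triangle inequality, where all contributions are simply added. The only mild points of care are discarding the $\lambda=0$ summand (which dies for $\ell\geq m$, rendering the $\lambda^{-p}$ factors meaningful) and arranging that the comparison polynomial is genuinely eventually positive rather than possibly identically zero, which the innocuous addition of the constant $1$ settles.
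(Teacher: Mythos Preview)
Your argument is correct and follows essentially the same route as the paper: apply Lemma~\ref{L:main_formula}, take moduli term by term, and use $|\lambda|\le\rho(i)$ to obtain a polynomial-times-$\rho(i)^\ell$ upper bound. Your version is in fact slightly more careful than the paper's---you treat the $\lambda=0$ summand separately and build an explicit real eventually-positive polynomial $P(\ell)$ directly from the $|c_p|$, whereas the paper simply majorises each $|P_\lambda(\ell)|$ by $\ell^r$ for $r$ exceeding all degrees---but the underlying idea is the same.
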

\begin{proof}
By \eqref{E:main_formula},
\beqn
a_i(\ell) &\leq& \sum_\lambda | (\vc u^\lambda \cdot B^\ell)_i | 
	\leq \sum_\lambda |P_\lambda(\ell)| {| \lambda |}^\ell \\
	&\leq& \sum_\lambda |P_\lambda(\ell)| {\delta}^\ell 
	= \left( \sum_\lambda |P_\lambda(\ell)|\right) {\delta}^\ell .
\eeqn

This is not quite the bound we want, because $P_\lambda(\ell)$
is a complex polynomial. However,
for large enough $\ell$ we have $|P_\lambda(\ell)| < \ell^r$ where $r$ is greater than the maximum degree of the $P_\lambda(\ell)$.
So $a_i(\ell) \leq  \ell^r \delta^\ell$ as required.
\end{proof}

\begin{remark}\em
The same proof applies if we work with the adjacency matrix $A$ of $\GG$,
but this gives too large an upper bound if the spectral radius of $\GG$
is greater than that of $\UU(i)$. This is why we work with the adjacency matrix $B$
of $\UU(i)$.
\end{remark}

\begin{remark}\em
For a tighter bound on the degree of the polynomial factor
we can
decompose $P_\lambda(\ell)$ into real and imaginary parts:
 $P_\lambda(\ell) = R_\lambda(\ell) + \ii S_\lambda(\ell)$.
 These have the same degree as $P_\lambda(\ell)$.
 Now,
 \[
 |P_\lambda(\ell)| \leq |R_\lambda(\ell)| + |S_\lambda(\ell)|
 \]
Every real polynomial is zero, eventually positive, or eventually negative.
Therefore 
$|R_\lambda(\ell)| = \pm R_\lambda(\ell)$ and
$|S_\lambda(\ell)| = \pm S_\lambda(\ell)$ for sufficiently large $\ell$ and for suitable choices of signs.
With these choices,
$T_\lambda(\ell) = \pm R_\lambda(\ell) \pm S_\lambda(\ell)$ is either zero or 
eventually positive, has the same degree as $P_\lambda(\ell)$,
and $ |P_\lambda(\ell)| \leq T_\lambda(\ell)$. If $\UU(i)$ is acyclic, $a_i(\ell)=0$ for large $\ell$,
and we can take $P(\ell) = 1$. If, instead, $\UU(i)$ is not acyclic,
$a_i(\ell)>0$ for all $\ell$, so $T_\lambda(\ell)$ cannot be zero: therefore it must be eventually positive.
\end{remark}

It is convenient to have a short term to express condition \eqref{e:lower_bound}:

\begin{definition}\em
\label{D:delta-bounded}
Node $i$ is {\em$\delta$-bounded} if \eqref{e:lower_bound} holds.
\end{definition}

Condition \eqref{e:lower_bound} imposes only a lower bound, so we may want to say $\delta$-lower-bounded, instead, but we drop the `lower' part for the sake of readability.

\begin{proposition}
\label{P:delta_bound}
If $i$ is $\delta$-bounded where $\delta$ is the Perron eigenvalue $\rho(i)$,
then $i$ has branching ratio $\rho(i)$. 
\end{proposition}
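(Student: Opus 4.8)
The plan is to combine the two bounds we already have. By Lemma~\ref{L:upper_bound}, an upper bound of the form \eqref{e:upper_bound} holds for node $i$ with $\delta = \rho(i)$ and $P(\ell)$ an eventually positive polynomial. If $i$ is $\delta$-bounded in the sense of Definition~\ref{D:delta-bounded} with this same $\delta = \rho(i)$, then by definition the lower bound \eqref{e:lower_bound} also holds, with $Q(\ell)$ an eventually positive polynomial. So both hypotheses of Lemma~\ref{L:bounds} are satisfied with the single common value $\delta = \rho(i)$.

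Therefore I would simply invoke Lemma~\ref{L:bounds}: it asserts that whenever the lower bound \eqref{e:lower_bound} and upper bound \eqref{e:upper_bound} both hold with the same $\delta > 0$ and eventually positive polynomials $P, Q$, the limit \eqref{E:d} exists and equals $\delta$. Since $\delta = \rho(i)$ here, we conclude that $\delta(i) = \rho(i)$, i.e.\ node $i$ has branching ratio $\rho(i)$, which is exactly the claim.

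The only subtlety, and it is minor, is the degenerate case where $\rho(i) = 0$, i.e.\ $\UU(i)$ is acyclic; then $a_i(\ell) = 0$ for large $\ell$, $\delta$-boundedness cannot hold for $\delta = 0$ in the strict sense needed (or is vacuous depending on interpretation), and the branching ratio is $0$ by convention, consistent with $\rho(i) = 0$. So one would note that the substantive content is the case $\rho(i) > 0$, where Lemma~\ref{L:bounds} applies verbatim. There is essentially no obstacle: the proposition is a packaging statement that isolates the one remaining ingredient — establishing $\delta$-boundedness at $\delta = \rho(i)$ — as the thing that still needs to be proved (which is the job of the Upstream Principle and the strongly connected case in later sections).

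\begin{proof}
Suppose $i$ is $\delta$-bounded with $\delta = \rho(i)$. We may assume $\rho(i) > 0$; otherwise $\UU(i)$ is acyclic by Proposition~\ref{P:acyclic}, so $a_i(\ell) = 0$ for large $\ell$ and the branching ratio is $0 = \rho(i)$ by convention. Since $i$ is $\delta$-bounded, the lower bound \eqref{e:lower_bound} holds with an eventually positive polynomial $Q$. By Lemma~\ref{L:upper_bound}, the upper bound \eqref{e:upper_bound} holds with the same $\delta = \rho(i)$ and an eventually positive polynomial $P$. Hence both hypotheses of Lemma~\ref{L:bounds} are met, and that lemma gives that the limit \eqref{E:d} exists and equals $\delta = \rho(i)$. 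Thus node $i$ has branching ratio $\rho(i)$.
\end{proof}
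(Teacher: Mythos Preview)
Your proof is correct and follows essentially the same approach as the paper: combine the upper bound from Lemma~\ref{L:upper_bound} with the assumed $\delta$-boundedness (the lower bound) and invoke Lemma~\ref{L:bounds}. Your explicit handling of the degenerate case $\rho(i)=0$ is a small added nicety that the paper's proof omits, but otherwise the arguments are identical.
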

\begin{proof}
Lemma \ref{L:upper_bound} gives an upper bound  
\eqref{e:upper_bound}, where $\delta=\rho(i)$, the Perron eigenvalue of $\UU(i)$. Since the vertex 
$i$ is $\delta$-bounded where $\delta = \rho(i)$, the corresponding
lower bound holds. Therefore $i$ has branching ratio $\rho(i)$
by Lemma \ref{L:bounds}.
\end{proof}

Thus the proof of the general theorem follows provided we can
establish a lower bound \eqref{e:lower_bound} with $\delta = \rho(i)$. 
For this reason, the rest of the paper focuses exclusively on lower bounds.

\subsection{Expression for $a_i(\ell)$}

The obvious way to approach this is to split off dominant
terms in \eqref{E:a_i_sum}:
\begin{equation}
\label{E:split_dominant}
a_i(\ell) = \left( \sum_{|\lambda|=\rho} \vc u^\lambda\cdot A^\ell \right)_i + 
	\left( \sum_{|\lambda|<\rho} \vc u^\lambda\cdot A^\ell \right)_i = S_1+S_2,\ {\rm say.}
\end{equation}
As in Lemma \ref{E:u_i_bound}, we expect $S_1$ to be of order $P_1(\ell)\rho^\ell$ for some polynomial $P_1$,
while $S_2$ is of order $P_2(\ell)\sigma^\ell$ for $\sigma<\rho$
and some polynomial $P_2$.
The dominant term is then $S_1$, {\em unless} $S_1 = 0$. This may
seem unlikely, but the entire problem depends upon proving that it cannot happen.

It is difficult to control the sum $S_1$ (we shall discuss this in more detail in Section \ref{S:asymp_h}),
so we proceed
 indirectly by proving that node $i$ is $\rho(i)$-bounded, where $\rho(i)$ is the Perron eigenvalue of $\UU(i)$. This follows from
various standard properties related to the Perron--Frobenius Theorem,
together with simple graph-theoretic observations.

\subsection{Upstream Principle}

Intuitively, we expect the
branching ratio of a given node $i$ to be dominated by  that
of the largest branching ratio of all nodes in $\UU(i)$.
The following easy result 
formalises this intuition for the crucial case of lower bounds.

\begin{proposition}[\bf Upstream Principle]
\label{P:upstream_principle}
Suppose that $j $ is upstream from $i$, and $j$ is $\delta$-bounded.
Then $i$ is $\delta$-bounded.
\end{proposition}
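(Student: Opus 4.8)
The plan is to build walks terminating at $i$ out of walks terminating at $j$ by concatenating each such walk with a fixed walk from $i$ to $j$ (recall our convention that a walk ``from $j$ to $i$'' is read backwards from the root $i$). Since $j$ is upstream from $i$, by Definition~\ref{D:upstream_sub} there exists at least one walk $w_0$ from $j$ to $i$; fix one, and let $k$ be its length. Then for any walk $w$ of length $\ell$ terminating at $j$, the concatenation of $w_0$ (the portion nearest the root $i$) with $w$ (attached at the far endpoint $j$ of $w_0$) is a walk of length $\ell + k$ terminating at $i$. This assignment $w \mapsto w_0 \cdot w$ is injective, because $w$ is recovered as the tail segment of $w_0 \cdot w$ after the first $k$ edges. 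Hence $a_i(\ell + k) \ge a_j(\ell)$ for every $\ell \ge 0$.

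Now invoke the hypothesis that $j$ is $\delta$-bounded: by Definition~\ref{D:delta-bounded} and \eqref{e:lower_bound} there is an eventually positive polynomial $Q$ with $a_j(\ell) \ge Q(\ell)\delta^\ell$ for all sufficiently large $\ell$. Combining with the inequality just established, for all sufficiently large $m$ (writing $m = \ell + k$),
\[
a_i(m) \;\ge\; a_j(m-k) \;\ge\; Q(m-k)\,\delta^{m-k} \;=\; \bigl(\delta^{-k} Q(m-k)\bigr)\,\delta^m .
\]
Set $\widetilde Q(m) = \delta^{-k} Q(m-k)$. This is a polynomial in $m$ of the same degree as $Q$, with leading coefficient $\delta^{-k}$ times that of $Q$; since $\delta > 0$ and $Q$ is eventually positive, $\widetilde Q$ is eventually positive as well. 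Thus $a_i(m) \ge \widetilde Q(m)\,\delta^m$ for all sufficiently large $m$, which is exactly condition \eqref{e:lower_bound} for node $i$. Hence $i$ is $\delta$-bounded.

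There is essentially no serious obstacle here; the only point requiring a little care is the bookkeeping around the shift by $k$: one must confirm that $a_i$ is compared to a shifted argument of $Q$, that replacing $Q(m-k)$ by a polynomial in $m$ preserves both the degree and eventual positivity, and that the factor $\delta^{-k}$ (finite and positive) does not spoil eventual positivity. The degenerate case $\ell = 0$ or $j = i$ (where $w_0$ may be the trivial length-$0$ walk) is harmless and can be noted in passing. I would present the concatenation argument first, then the polynomial bookkeeping, keeping the whole proof to a few lines.
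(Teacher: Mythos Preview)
Your proof is correct and follows essentially the same approach as the paper: fix a walk of some length $k$ from $j$ to $i$, concatenate to get $a_i(\ell+k)\ge a_j(\ell)$, then shift the polynomial lower bound by $k$ and absorb the factor $\delta^{-k}$. Your bookkeeping around the shifted polynomial $\widetilde Q(m)=\delta^{-k}Q(m-k)$ is in fact slightly cleaner than the paper's version.
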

\begin{proof}
There is a walk from $j$ to $i$; let its length be $m$.
Then
\[
a_{m+\ell}(i) \geq a_\ell(j)
\] 
since any walk of length $\ell$ terminating at $j$ can be concatenated
with the walk from $j$ to $i$ to give a walk of length $m+\ell$ terminating at $i$,
and distinct walks terminating at $j$ yield distinct walks terminating at $i$.
Since $j$ is $\delta$-bounded, there exists $Q(\ell)$, eventually positive,
such that
\[
a_{m+\ell}(i) \geq a_\ell(j) \geq Q(\ell)\delta^\ell\quad \mbox{for sufficiently large}\ \ell.
\]
Therefore
\[
a_{m+\ell}(i) \geq (\delta^{-m}Q(m+\ell)) \delta^{m+\ell}\quad \mbox{for sufficiently large}\ m+\ell,
\]
so $i$ is $\delta$-bounded.
\end{proof}

\section{Motivating Examples}
\label{S:ME}

\begin{example}\em
\label{ex:cycle}
Let $\GG$ be a simple $n$-cycle, as in Fig. \ref{F:n-cycle}.
This network is strongly connected.

\begin{figure}[h!]
\centerline{%
\includegraphics[width=0.2\textwidth]{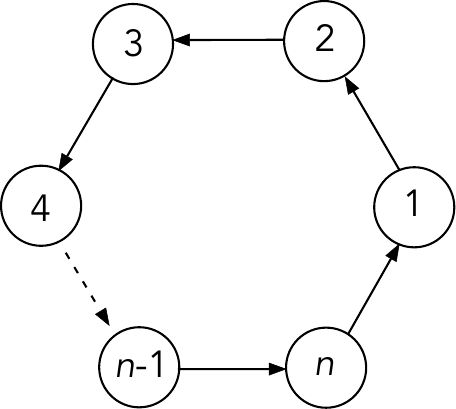}
}
\caption{{\em Left}: A simple $n$-cycle.}
\label{F:n-cycle}
\end{figure}
The adjacency matrix is
\[
A = \Matrix{0 & 1 & 0 & \cdots & 0 & 0 \\
0 & 0 & 1 & \cdots & 0 & 0 \\
\vdots & \vdots & \vdots & \ddots & \vdots & \vdots \\
0 & 0 & 0 & \cdots & 0 & 1 \\
1 & 0 & 0 & \cdots & 0 & 0 
}
\]
The eigenvalues are $\lambda_k = \zeta^k$ where $\zeta = \ee^{2\pi\ii/k}$
is a primitive $k$-th root of unity. All eigevalues are all simple, and all have absolute value $1$.
The unique real eigenvalue with maximal absolute value (the Perron eigenvalue) is $1$.
Clearly, $a_i(\ell) = 1$ for all $i, \ell$.

This example shows that even when $\GG$ is strongly connected,
eigenvalues with maximal absolute value need not be unique.
\end{example}

\begin{example}\em
\label{no_ratio}

An example where \eqref{E:ratio} fails to exist is  node 1 in the strongly connected network depicted in
Fig.\ref{fig:6node} (right); the input
tree of node 1 is shown Fig.\ref{fig:6node} (left).

\begin{figure}[h!]
\centerline{%
\includegraphics[width=0.2\textwidth]{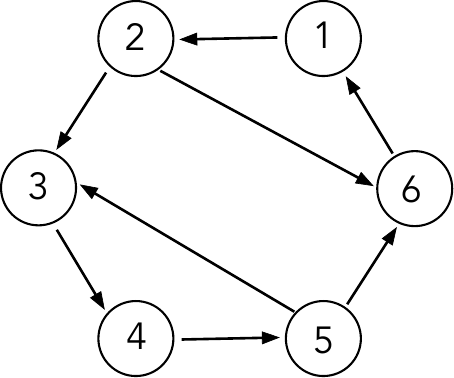}
\qquad \qquad 
\includegraphics[width=0.12\textwidth]{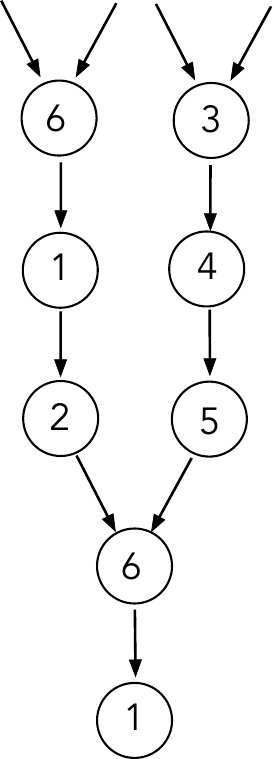}}
\caption{{\em Left}: A directed graph
with six nodes. {\em Right}: Input tree of node 1.
}
\label{fig:6node}
\end{figure}

The sequence $a_1(\ell)$ is:
\[
1,2,2,2,4,4,4,8,8,8,\ldots
\]
The ratios $a_1(\ell+1)/a_1(\ell)$ are:
\[
2,1,1,2,1,1,2,1,1,2,\ldots
\]
which fails to converge.

The adjacency matrix of the graph is:
\[
A = \Matrix{0 & 1 & 0 & 0 & 0 & 0 \\ 0 & 0 & 1 & 0 & 0 & 1 \\ 0 & 0 & 0 & 1 & 0 & 0 \\ 0 & 0 & 0 &
0 & 1 & 0 \\ 0 & 0 & 1 & 0 & 0 & 1 \\ 1 & 0 & 0 & 0 & 0 & 0} 
\]
Its eigenvalues are $\rho, \rho\zeta, \rho\zeta^2$ where $\rho=\sqrt[3]{2}$
and $\zeta = \ee^{2\pi\ii/3}$ is a primitive cube root of unity,
together with an eigenvalue 0 of multiplicity 3.
The first three eigenvalues all have the maximal absolute value $\rho$, and the first of
them is the Perron eigenvalue 
$\rho=\sqrt[3]{2}$. Since the network is strongly connected,
$\rho(i)=\rho$ for all $i$.

We can find the branching ratio explicitly.
The case $i = 1$ is typical: the other nodes give similar results.
The value $\rho=\sqrt[3]{2}$ makes intuitive sense 
since $a_1(\ell)$ doubles every three steps, so
$a_1(\ell)$ is approximately $2^{\ell/3}$. However, it is not
asymptotic to $2^{\ell/3}\rho^\ell$, because the exact value is:
\begin{equation}
\label{E:exact}
a_1(\ell) = 2^{\lfloor (\ell+1)/3\rfloor}.
\end{equation}
By \eqref{E:exact}, 
\[
\frac{a_1(\ell)}{\rho^\ell} = \left\{ \begin{array}{lll} 
1 \ \ \, \quad {\rm if}\quad  \ell \equiv 0 \pmod{3}\\
\rho^{-1} \quad {\rm if}\quad  \ell \equiv 1 \pmod{3}\\
\rho^{-2}  \quad {\rm if}\quad  \ell \equiv 2 \pmod{3}
\end{array}\right. 
\]
Passing to an asymptotic expression (for later use) we have
\[
a_1(\ell) \sim \left\{ \begin{array}{ll} 
{\rho^\ell} &{\rm if}\quad  \ell \equiv 0 \pmod{3}\\
\rho^{-1}{\rho^\ell} &{\rm if}\quad  \ell \equiv 1 \pmod{3}\\
\rho^{-2}{\rho^\ell}  &{\rm if}\quad  \ell \equiv 2 \pmod{3}
\end{array}\right. 
\]
Therefore
\[
\lim_{\ell \to \infty} (a_1(\ell))^{1/\ell} = 
\rho 
\]
in all three cases.

\end{example}

This example sheds some light on what happens when the maximal absolute value
eigenvalues are multiple (that is, $A$ has positive period $h > 1$, see Section \ref{S:PFT}).
It also shows that even when \eqref{E:ratio} converges,
it can do so slowly. The same goes for \eqref{E:d}.
For computations to more than a couple of decimal places,
it is better to compute the Perron eigenvalue using a computer
algebra package.

It also shows that even when the network is strongly connected,
the asymptotic formula $a_i(\ell) \sim C\rho^\ell$ need not hold.
Here, the constant $C$ depends on $\ell \pmod{3}$.
In Section \ref{S:asymp_h} we prove that this example
is typical when the period $h > 1$.

\begin{example}\em
\label{abff}
Next, we consider a network that is not strongly connected, Fig.\ref{F:abff}.
 Here $\alpha,\beta$ are positive integers representing edge multiplicities.

\begin{figure}[h!]
\centerline{%
\includegraphics[width=0.3\textwidth]{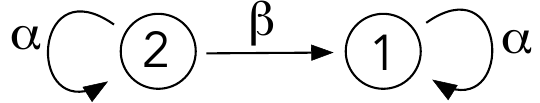}}
\caption{Network whose adjacency matrix is reducible.}
\label{F:abff}
\end{figure}

This example has adjacency matrix
\[
A = \Matrix{\alpha  & 0 \\ \beta & \alpha}
\]
which is obviously reducible. Indeed, the network has two strongly connected components.

There is still a unique maximal eigenvalue $\alpha$, but now it has
multiplicity $2$. The left generalised eigenvectors
can be taken to be $\vc e_1 = [1,0]$ and $\vc e_2 = [0,1]$. We have
\[
\vc e_1 A = [\alpha,0] = \alpha \cdot \vc e_1 \qquad \vc e_2A = [\beta,\alpha] = \beta \cdot \vc e_1+\alpha \cdot \vc e_2.
\]
The powers of $A$ can easily be shown to be:
\[
A^\ell = \Matrix{\alpha^\ell  & 0 \\ \ell \alpha^{\ell-1}\beta & \alpha^\ell}.
\]
The numbers of walks of length $\ell$ ending at nodes 1, 2 respectively are:
\[
 a_1(\ell) = \alpha^\ell  + \ell \alpha^{\ell-1}\beta  \qquad a_2(\ell) = \alpha^\ell .
\]
Clearly the branching ratio for node 2 is $\delta(2)=\alpha$. We claim that the
same holds for node 1. This follows since
\[
 a_1(\ell) = \alpha^\ell  + \ell \alpha^{\ell-1}\beta =\alpha^\ell\left(1+ \frac{\beta}{\alpha}\ell\right)
\]
so
\[
\frac{\log  a_1(\ell)}{\ell} = \frac{\ell \log \alpha}{\ell} + \frac{\log(1+ \frac{\beta}{\alpha}\ell)}{\ell}
\]
which tends to $\log \alpha$ as $\ell \to \infty$. Therefore $a_1(\ell)^{1/\ell}$
tends to 1.
Note that the non-diagonal Jordan block ($\beta$) does not change the branching ratio.

We also have
\beqn
\frac{a_1(\ell + 1)}{a_1(\ell)} &=& \frac{\alpha^{\ell+1}+(\ell+1)\alpha^\ell \beta}{\alpha^\ell+\ell \alpha^{\ell-1}\beta}
	\to \frac{\ell \beta}{\ell \beta/\alpha} = \alpha \\
\frac{a_2(\ell + 1)}{a_2(\ell)} &=& \frac{\alpha^{\ell+1}}{\alpha^\ell} = \alpha
\eeqn
as $\ell \to \infty$. So here the sequence of successive ratios converges,
and it converges to the same value for both nodes, even though $A$ is reducible.

\end{example}

\begin{example}\em
\label{ex:8nodeFF}
This example illustrates how the asymptotics of $a_i(\ell)$
can behave for reducible $A$ of period $h > 1$. Fig. \ref{F:8nodeFF}
shows two 4-cycles linked by one edge. The graph has period $h=4$.

\begin{figure}[b]
\centerline{%
\includegraphics[width=0.35\textwidth]{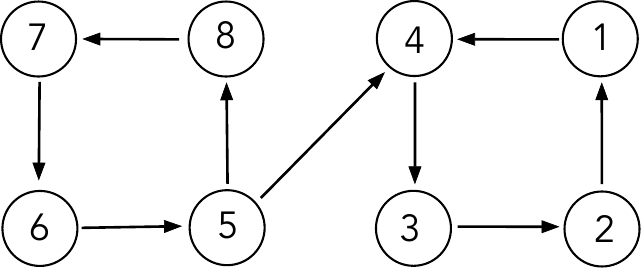}}
\caption{An 8-node reducible network with $h=4$. }
\label{F:8nodeFF}
\end{figure}

The adjacency matrix is upper block triangular, as it should be because the natural ordering of the nodes is compatible with the feedforward structure of the graph:
\[
A =\left[ \begin{array}{cccc|ccccc}
0 & 0& 0& 1& 0& 0& 0&  0\\
1& 0& 0& 0& 0& 0& 0& 0 \\
0& 1& 0& 0& 0& 0& 0& 0 \\
0& 0& 1& 0& 0& 0& 0& 0 \\
\hline
0& 0& 0& 1& 0& 0& 0& 1 \\
0& 0& 0& 0& 1& 0& 0& 0 \\
0& 0& 0& 0& 0& 1& 0& 0 \\
0& 0& 0& 0& 0& 0& 1& 0 
\end{array}\right]
\]
Eigenvalues are
\[
1, \ii, -1, -\ii
\]
each of multiplicity 2. Absolute values are all $1$, and the Perron eigenvalue is $1$.

It can be shown inductively that (for example)
\[
a_6(\ell) = \left\lceil \frac{\ell}{4} \right\rceil
\]
where $\lceil \cdot \rceil$ is the ceiling function.
That is,
\begin{equation}
\label{E:8nodeFF}
a_6(\ell) = \left\{
\begin{array}{rcl} 
\frac{\ell}{4} + 1 & {\rm if} & \ell \equiv 0 \pmod{4} \\
\frac{\ell}{4} + \frac{7}{4} & {\rm if} & \ell \equiv 0 \pmod{4} \\
\frac{\ell}{4} + \frac{3}{2}  & {\rm if} & \ell \equiv 0 \pmod{4} \\
\frac{\ell}{4} + \frac{1}{4}  & {\rm if} & \ell \equiv 0 \pmod{4} 
\end{array}
\right.
\end{equation}
Therefore the asymptotic behaviour of $a_6(\ell)$ is of the form
$P_k(\ell) \rho^\ell$ where the polynomials $P_k$
depends on $k=\ell \pmod{4}$. No single polynomial factor works.
\end{example}

\begin{example}\em
\label{E:polycycle}

Let $\GG$ have $n$ nodes, forming a unidirectional cycle in
which the edge from node $i+1$ to $i$ has multiplicity $m_i \geq 1$.
(Identify node 1 with node $n+1$ here.) We call such a network
an $(m_1,\ldots,m_n)$-{\em polycycle} (Fig. \ref{F:polycycle}).
$\GG$ is strongly connected, and $A$ is irreducible of period $n$.

\begin{figure}[h!]
\centerline{%
\includegraphics[width=0.2\textwidth]{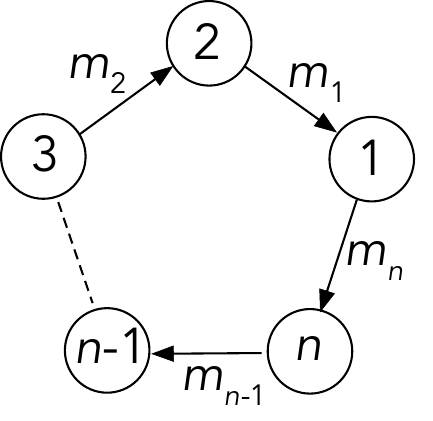}}
\caption{An $n$-node polycycle.}
\label{F:polycycle}
\end{figure}

Let $M = m_1 m_2 \cdots m_n$. It is easy to see that
if $p \geq 0, 0\leq q\leq n-1$, then
\[
a_1(pn+q) = M^p m_1\cdots m_q
\]
The Perron eigenvalue is $\rho = M^{1/n}$. Therefore
\[
a_1(pn+q) = \rho^{n p} m_1\cdots m_q = C_q  \rho^{pn+q}
\]
where
\[
C_q = \frac{m_1\cdots m_q}{\rho^q}.
\]
Hence,
\[
a_1(\ell) = C_{\ell \bmod n} \rho^\ell.
\]
Moreover,
\[
C_{q+1}/C_q = m_q/\rho \quad {\rm when}\ 1 \leq q \leq n-1
\]
So the ratios of successive $C_q$ can differ, in a way that 
depends on the sequence $(m_i)$. In particular, then, 
\[
    \lim_{\ell \to \infty} a_i(\ell+1)/a_i(\ell)
\]
does not exist.
Nonetheless, $\delta(i)$ does exist: for instance,
\[
   \delta(1)=\lim_{\ell\to\infty} \left(a_1(\ell)\right)^{1/\ell}=\rho \lim_{\ell \to\infty} C_{\ell \bmod n}^{1/\ell} =\rho
\]
\end{example}

\begin{example}\em
\label{ex:3nodeFF}

Finally we consider an example where $A$ is reducible
and branching ratios are different for each node, shown in Fig.\ref{F:3nodeFF}.

\begin{figure}[h!]
\centerline{%
\includegraphics[width=0.3\textwidth]{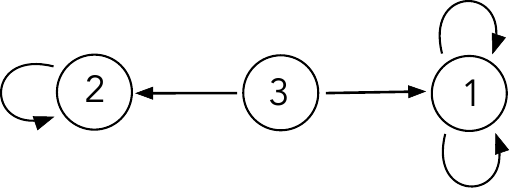}}
\caption{3-node network with distinct branching ratios.}
\label{F:3nodeFF}
\end{figure}

The adjacency matrix is
\[
\Matrix{2&0&0\\0&1&0\\1&1&0}
\]
with eigenvalues $2,1,0$. The matrix is reducible. Indeed, $\GG$ has three
SCCs, one for each node.
It is easy to see that for all $\ell$,
\beqn
a_1(\ell) &=& 3\cdot 2^\ell \\
a_2(\ell) &=& 0 \\
a_2(\ell) &=& 2
\eeqn
Node 1 has branching ratio 2, node 2 has branching ratio 1, and node 3 
has branching ratio 0.
\end{example}

\section{Perron--Frobenius Theorem for Non-Negative Matrices}
\label{S:PFT}

Next we recall the Perron--Frobenius Theorem, the main technical result
that we need. Everything in this section
is standard. The theorem was originally proved
by Perron \cite{P07} for positive matrices, and later generalised by
Frobenius \cite{F12} to irreducible non-negative matrices (Definition \ref{D:irreducible}). 
To state (the relevant parts of) Frobenius's generalisation, we
need the notions of reducible and irreducible matrices (see Definition \ref{D:irreducible}), and also the following:

\begin{definition}\em
\label{D:period}
Let $M$ be a non-negative square matrix of size $n$, and let $1 \leq i \leq n$. The {\em period of index $i$} of $M$ is the gcd
of all $m>0$ such that $(A^m)_{ii} > 0$. 
If $M$ is irreducible, this is independent of $i$: it is then called the
 {\em period} of $M$ and denoted by $h(M)$ (or just $h$, if $M$ is clear from the context).

 \end{definition}
 
 It is known that if $M$ is irreducible then $h$ is the gcd of all simple cycle lengths in the graph with adjacency matrix $M$ \cite[Section 1.3]{K98}.

We now state the Perron--Frobenius Theorem in a form that
includes only properties that are required in this paper. 

\begin{theorem}[\bf Perron--Frobenius]
\label{T:Perron-Frobenius}
Let $M$ be an irreducible non-negative matrix with period $h$.
Then:

{\rm (a)} $M$ has a (left) eigenvalue $\rho$ (the Perron eigenvalue) 
such that $\rho$ is maximal
among all $|\lambda|$ as $\lambda$ ranges over all eigenvalues. The
number $\rho$ is also called the {\em spectral radius} or {\em Perron eigenvalue} of $M$.

{\rm (b)}  The eigenvalue $\rho$ is real, simple, and positive.
However, other eigenvalues may also have absolute value $\rho$, see {\rm (h)}.

{\rm (c)}  If $\vc v$ is a (left) eigenvector of $M$ for eigenvalue $\rho$ then
all components of $\vc v$ have the same sign, so $\vc v$ can be chosen 
so that all components are positive (the \emph{Perron eigenvector}).

{\rm (d)}  There is also a right eigenvector $\vc w$ of $M$ for eigenvalue $\rho$,
and this is real, all of its components have the same sign so it can be chosen to be positive.

{\rm (e)} The vectors $\vc v, \vc w$ can be normalised, so that $\vc w ^{\rm T} \vc v = 1$.

{\rm (f)} The matrix $\vc v \vc w ^{\rm T}$ is positive.

{\rm (g)} The following property (the left-hand side of which is called the `C\'esaro average') holds:
\begin{equation}
\label{E:cesaro}
\lim_{k \to \infty} \frac{1}{k}\sum_{\ell=0}^k \rho^{-\ell}A^\ell = \vc v \vc w ^{\rm T}
\end{equation}

{\rm (h)} $M$ has exactly $h$ eigenvalues with absolute value $\rho$.
Each has them is simple and has the form $\zeta^m \rho$, where $\zeta=\ee^{2\pi\ii/h}$ is a primitive $h$-th
root of unity.
\qed\end{theorem}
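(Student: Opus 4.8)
The plan is to establish (a)--(d) first by a variational argument, then the simplicity claims in (b) together with the normalisation (e)--(f), then the structure of the peripheral spectrum in (h), and finally to read the C\'esaro formula (g) off the resulting spectral decomposition. The one combinatorial input used throughout is that for an irreducible non-negative $n\times n$ matrix $M$ the matrix $(I+M)^{n-1}$ is \emph{strictly} positive: irreducibility says that in the associated digraph there is a walk of length at most $n-1$ between any ordered pair of nodes, and expanding $(I+M)^{n-1}=\sum_{k}\binom{n-1}{k}M^k$ picks up a strictly positive contribution from that walk. For (a), (c), (d) I would use the Collatz--Wielandt quotient $r(\vc x)=\min_{i:\,x_i>0}\,(\vc x M)_i/x_i$, defined for non-negative nonzero row vectors, and set $\rho:=\sup r(\vc x)$, the supremum being taken over the standard simplex and hence attained. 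Replacing a maximiser $\vc x_0$ by $\vc x_0(I+M)^{n-1}$ (same or larger $r$-value, now strictly positive) we may assume the maximiser $\vc v$ is strictly positive; then $\vc vM=\rho\vc v$, since a strict inequality $(\vc vM)_i>\rho v_i$ in some coordinate could be used to raise $r$, contradicting maximality. Positivity of the components is automatic, and $\rho>0$ because $M\neq0$ and $\vc v>0$. The same construction applied to $M^{\rm T}$ produces a strictly positive right eigenvector $\vc w$ with $M\vc w=\rho\vc w$, giving (d); and if $\vc zM=\lambda\vc z$ then taking absolute values coordinatewise yields $|\vc z|\,M\ge|\lambda|\,|\vc z|$ by the triangle inequality, so $r(|\vc z|)\ge|\lambda|$ and hence $\rho\ge|\lambda|$, which is (a).

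For simplicity (part (b)) and (e)--(f): two independent left $\rho$-eigenvectors would have a real combination that is non-negative, nonzero, and has a zero coordinate, contradicting the fact (from $(I+M)^{n-1}>0$) that a non-negative $\rho$-eigenvector must be strictly positive; this gives geometric simplicity. Algebraic simplicity follows from $\vc v\vc w>0$: if $\rho$ carried a nontrivial Jordan block there would be a row vector $\vc u$ with $\vc u(M-\rho I)=\vc v$, and right-multiplying by $\vc w$ gives $0=\vc u(M-\rho I)\vc w=\vc v\vc w$, contradicting $\vc v,\vc w>0$. Scaling so that $\vc w^{\rm T}\vc v=1$ gives (e), and $\vc v\vc w^{\rm T}$ is positive because $\vc v,\vc w$ are, which is (f).

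For (h) I would argue as follows. If $\vc zM=\lambda\vc z$ with $|\lambda|=\rho$, then $|\vc z|\,M\ge\rho|\vc z|$ combined with maximality of $\rho$ forces equality, so by simplicity $|\vc z|$ is a positive multiple of $\vc v$; in particular no coordinate of $\vc z$ vanishes. Writing $z_i=v_i\mathrm{e}^{\ii\theta_i}$ and comparing arguments in $\sum_j z_jm_{ji}=\lambda z_i$ forces every nonzero term on the left to carry argument $\arg(\lambda)+\theta_i$, which says exactly that the diagonal matrix $D=\mathrm{diag}(\mathrm{e}^{\ii\theta_i})$ satisfies $D^{-1}MD=(\lambda/\rho)\,M$. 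Hence the spectrum of $M$ is invariant under multiplication by $\lambda/\rho$; taking $\lambda$ of maximal multiplicative order $h'$ among the peripheral eigenvalues, the peripheral spectrum is exactly $\{\zeta^m\rho:0\le m<h'\}$ with $\zeta=\mathrm{e}^{2\pi\ii/h'}$, each $\zeta^m\rho$ simple as the image of the simple eigenvalue $\rho$ under the conjugation by $D^m$. It remains to identify $h'$ with the period $h$: the relation $D^{-1}MD=\zeta M$ partitions the nodes into $h'$ classes cyclically shifted by every edge, so all closed-walk lengths are divisible by $h'$ (whence $h'\mid h$), while a closed walk of length $h$ gives $(M^h)_{ii}>0$, and since $\rho^h$ is then a simple eigenvalue of $M^h$ inside each of its aperiodic diagonal blocks one gets $h\mid h'$. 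Assembling this phase argument cleanly and reconciling the analytic count $h'$ with the combinatorial period $h$ is the main obstacle.

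Finally (g): decompose $\C^n=E_\rho\oplus W$ into the one-dimensional (by algebraic simplicity) generalised $\rho$-eigenspace and the sum of the other generalised eigenspaces, both $M$-invariant, with spectral projector onto $E_\rho$ equal to $\vc v\vc w^{\rm T}$ under the normalisation (e). On $E_\rho$ the operator $\rho^{-\ell}M^\ell$ acts as the identity, so its C\'esaro average is $\vc v\vc w^{\rm T}$. On $W$, $\rho^{-\ell}M^\ell$ is a finite sum of terms $(\lambda/\rho)^\ell$ times a polynomial in $\ell$ with $|\lambda|\le\rho$ and $\lambda\neq\rho$: the terms with $|\lambda|<\rho$ tend to $0$, and the peripheral terms $\lambda=\zeta^m\rho$ with $m\neq0$ are simple, contributing bounded sequences $\zeta^{m\ell}$ whose C\'esaro averages vanish because $\zeta^m\neq1$. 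Letting $k\to\infty$ in $\frac1k\sum_{\ell=0}^{k}\rho^{-\ell}M^\ell$ therefore leaves exactly $\vc v\vc w^{\rm T}$, which is \eqref{E:cesaro}.
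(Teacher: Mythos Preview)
The paper does not actually prove this theorem: its entire ``proof'' is the single sentence referring the reader to Gantmacher, Lancaster--Tismenetsky, and Meyer. So there is nothing in the paper to compare your argument against; you have supplied a genuine proof where the authors only gave citations.

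Your outline is the standard Collatz--Wielandt variational argument combined with Wielandt's phase lemma, and it is essentially correct. Two small points are worth tightening. First, in (h) the similarity you actually obtain from $\theta_j=\arg\lambda+\theta_i$ whenever $m_{ji}>0$ is $D^{-1}MD=(\rho/\lambda)M$ rather than $(\lambda/\rho)M$; this still yields rotation-invariance of the spectrum, so nothing downstream changes. Second, and more substantively, your sketch of $h'=h$ is slightly garbled: the ``aperiodic diagonal blocks'' should be those of $M^{h'}$, not $M^{h}$. The clean way to finish is this. The cyclic phase classes make $M^{h'}$ block-diagonal with $h'$ irreducible blocks, each of spectral radius $\rho^{h'}$; the period of each block (computed from closed-walk lengths in $M$ divided by $h'$) is $h/h'$, so by the part of (h) already proved each block contributes $h/h'$ peripheral eigenvalues, giving $h$ in total for $M^{h'}$. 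On the other hand the $h'$ simple peripheral eigenvalues $\zeta^m\rho$ of $M$ all become $\rho^{h'}$ under the $h'$-th power, so $M^{h'}$ has exactly $h'$ peripheral eigenvalues counted with multiplicity. Hence $h=h'$. You correctly flagged this reconciliation as the main obstacle, and all the ingredients are present in your sketch. The C\'esaro argument for (g), splitting off the simple peripheral eigenvalues and using that $\frac{1}{k}\sum_{\ell=0}^{k}\zeta^{m\ell}\to 0$ for $\zeta^m\neq 1$, is exactly right.
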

\begin{proof}
See \cite[Chapter XIII]{G00}, \cite[Section 15.3]{LT85},and \cite[Chapter 7]{M00}.
\end{proof}

\ignore{
\begin{remark}\em\footnote{\RED{Remark \ref{R:P-values} could be omitted.}}
\label{R:P-values}
It is natural to ask: What values can the branching ratio $\delta(i)$ take,
for suitable networks $\GG$?
Equivalently: What are the possible values of the Perron eigenvalue $\rho$ of
a nonnegative integer matrix?
This is  the `Perron--Frobenius inverse eigenvalue problem'.
There are some obvious necessary conditions. 

{\rm (a)} Either $\rho = 0$ or $\rho \geq 1$.

{\rm (b)} $\rho$ is an algebraic integer.

{\rm (c)} If $\sigma$ is a Galois conjugate (root of the minimum polynomial) of $\rho$ then $|\sigma| \leq \rho$.

\noindent
Lind \cite[Theorem 1]{L84} proves that these 
necessary conditions are also sufficient, and gives an
algorithm to construct a suitable matrix from the minimum polynomial
of $\rho$. The size of the matrix must sometimes be larger 
than the degree of this polynomial. 
The proof is technical, and is based on ergodic theory. 
\end{remark}
}

\section{Branching Ratios in Strongly Connected Networks}
\label{S:BRSCNP}

Suppose that $\GG$ is strongly connected, with any period $h$.
Then $A$ is irreducible. The main result is:

\begin{theorem}
\label{T:BRexists_any_h}
Let $\GG$ be a strongly connected network 
and let $i$ be any node.  Then 

{\rm (a)} Node $i$ has a well-defined branching ratio $\delta(i)$.
That is, \eqref{E:d} holds for $\delta=\delta(i)$.

{\rm (b)} For all $i$ we have $\delta(i) = \rho(i)$, where $\rho(i)$ is the Perron eigenvalue
of the adjacency matrix of $\GG$.

{\rm (c)} All nodes of $\GG$ have the same branching ratio.
\end{theorem}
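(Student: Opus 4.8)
The plan is to obtain the lower bound $a_i(\ell) \geq Q(\ell)\rho^\ell$ required by Lemma \ref{L:bounds} (via Proposition \ref{P:delta_bound}), since Lemma \ref{L:upper_bound} already supplies the matching upper bound with $\delta = \rho$, the Perron eigenvalue of $A$ (which here equals $\rho(i)$ for every $i$ since $\GG$ is strongly connected, so $\UU(i) = \GG$ for all $i$). Once (a) and (b) are established for one node, (c) is immediate: Proposition \ref{P:same_P_ev} already tells us $\rho(i) = \rho(j)$ whenever $i,j$ lie in the same SCC, and here the whole network is a single SCC, so all the $\rho(i)$ coincide with $\rho(\GG)$ and hence all the $\delta(i)$ do too.

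First I would invoke part (g) of the Perron--Frobenius Theorem, the C\'esaro average formula \eqref{E:cesaro}. Pairing both sides with $\vc u$ on the left and extracting the $i$-th coordinate gives
\[
\lim_{k \to \infty} \frac{1}{k}\sum_{\ell=0}^{k} \rho^{-\ell}\, a_i(\ell) = (\vc u \, \vc v \, \vc w^{\mathrm T})_i = (\vc u \cdot \vc v)\, w_i,
\]
where $\vc v$ is the positive left Perron eigenvector and $\vc w$ the positive right Perron eigenvector. Since $\vc u$ has all entries $1$ and $\vc v, \vc w$ are strictly positive (parts (c), (d)), the right-hand side is a strictly positive constant, call it $c > 0$. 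Thus the C\'esaro means of the sequence $\rho^{-\ell} a_i(\ell)$ converge to $c > 0$.

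Next I would turn this C\'esaro statement into a genuine lower bound on $a_i(\ell)$ itself for infinitely many $\ell$, and then, using strong connectivity, upgrade it to a lower bound valid for all large $\ell$. Since the C\'esaro averages of $\rho^{-\ell}a_i(\ell)$ tend to $c>0$, in particular they are bounded below by $c/2$ for large $k$, so for each large $k$ there exists some $\ell \leq k$ with $\rho^{-\ell} a_i(\ell) \geq c/2$; letting $k\to\infty$ this produces an infinite set of indices $\ell$ with $a_i(\ell) \geq (c/2)\rho^\ell$. Now fix such a "good" index $\ell_0$ with $a_i(\ell_0) \geq (c/2)\rho^{\ell_0}$. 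Because $\GG$ is strongly connected and (being non-acyclic, by Proposition \ref{P:acyclic}, as $\rho > 0$) contains a cycle through every node --- more precisely, for the period-$h$ case one uses that for each residue class there are walks from $i$ back to $i$ of all sufficiently large lengths $\equiv 0 \pmod h$ --- one can prepend or append closed walks at $i$ to interpolate. A cleaner route: for any $\ell$, concatenating a walk counted in $a_i(\ell)$ with a fixed closed walk at $i$ of length $d$ shows $a_i(\ell + d) \geq a_i(\ell)$, so $a_i$ is "monotone along arithmetic progressions of step $d$" for any closed-walk length $d$ at $i$; choosing $d$ to be (say) the gcd-related set of achievable return lengths, the good indices $\ell_0, \ell_0 + d, \ell_0 + 2d, \dots$ all satisfy $a_i(\ell) \geq (c/2)\rho^{\ell_0} = (c/2)\rho^{-d\cdot(\text{something})}\rho^\ell$, and filling the finitely many residues mod $d$ (each has its own good index by the same argument applied to the sub-sequence) yields $a_i(\ell) \geq Q \rho^\ell$ for a positive constant $Q$ and all large $\ell$. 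Then Proposition \ref{P:delta_bound} gives $\delta(i) = \rho(i) = \rho(\GG)$, which is (a) and (b), and (c) follows as above.

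The main obstacle I anticipate is the interpolation step in the period-$h>1$ case: the C\'esaro formula only yields infinitely many good indices, not an arithmetic progression outright, and when $h>1$ the quantity $\rho^{-\ell}a_i(\ell)$ genuinely oscillates (as Example \ref{no_ratio} shows), so one cannot hope for a single clean limit. The fix is precisely to exploit that closed walks at $i$ have lengths forming (eventually) all multiples of $h$ that are large enough (a numerical-semigroup fact, equivalent to the standard characterization of the period), so that concatenation propagates each good index to a full tail of its residue class mod $h$; combining the $h$ residue classes (each of which contains a good index, by applying the C\'esaro bound along that class, or simply because the finitely many classes must share the infinitely many good indices among them) gives the uniform lower bound. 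Everything else is bookkeeping with the already-cited Perron--Frobenius facts and Lemma \ref{L:bounds}.
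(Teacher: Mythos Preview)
Your strategy differs from the paper's. The paper uses the eigenvalue decomposition of Lemma~\ref{L:main_formula} to write $a_i(\ell)=S_1+S_2$ with $S_1$ collecting the contributions from eigenvalues of modulus $\rho$, and then invokes the C\'esaro formula~\eqref{E:cesaro} \emph{by contradiction}: if $S_1\equiv0$ then $a_i(\ell)=S_2=o(\rho^\ell)$, which forces the C\'esaro limit to vanish rather than equal $(\vc u\,\vc v\vc w^{\mathrm T})_i>0$. The lower bound is then read off from the explicit shape of $S_1$. You instead try to pass directly from positivity of the C\'esaro average to a pointwise lower bound via walk concatenation, bypassing the spectral decomposition. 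That is a reasonable combinatorial alternative, but the interpolation step as you have written it does not go through.

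The gap is exactly where you write $a_i(\ell_0+kd)\geq(c/2)\rho^{\ell_0}=(c/2)\rho^{-kd}\rho^{\ell_0+kd}$. Concatenation with a fixed closed walk of length $d$ gives only $a_i(\ell+d)\geq a_i(\ell)$, an \emph{additive} monotonicity; propagating a single good index $\ell_0$ therefore yields the constant bound $a_i(\ell_0+kd)\geq(c/2)\rho^{\ell_0}$, whose implied prefactor $(c/2)\rho^{-kd}$ tends to $0$ with $k$. Having one (or infinitely many) good indices in each residue class does not repair this, because nothing controls the gaps between successive good indices: along $\ell\equiv r\pmod d$ you only conclude $a_i(\ell)^{1/\ell}\geq(\text{const})^{1/\ell}\to1$, not $\to\rho$. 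So the claimed uniform bound $a_i(\ell)\geq Q\rho^\ell$ does not follow, and Proposition~\ref{P:delta_bound} cannot be invoked.

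A repair in the same combinatorial spirit is to replace monotonicity by \emph{super-multiplicativity}. Work with the closed-walk counts $w(\ell)=(A^\ell)_{ii}$, which satisfy $w(\ell_1+\ell_2)\geq w(\ell_1)w(\ell_2)$. Reading~\eqref{E:cesaro} at the $(i,i)$ entry gives $\tfrac1k\sum_{\ell\leq k}\rho^{-\ell}w(\ell)\to v_iw_i>0$, so $w(\ell)^{1/\ell}$ comes arbitrarily close to $\rho$ along a subsequence; Fekete's lemma (applied on $h\Z$) then forces $w(hm)^{1/(hm)}\to\rho$. Since $a_i(hm+r)\geq a_i(r)\,w(hm)\geq w(hm)$ for each residue $r$, this yields $\liminf_\ell a_i(\ell)^{1/\ell}\geq\rho$, and together with Lemma~\ref{L:upper_bound} the theorem follows.
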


The proof is based on \eqref{E:main_formula} and the C\'esaro average
\eqref{E:cesaro}. (There is a simpler proof when $h=1$ but we omit this.)
First, we prove a simple lemma.
\begin{lemma}
\label{L:Fd(z)}
Let $z \in \C$ and $d \in \N$. Define the power series
\[
F_d(z) = \sum_{\ell=0}^\infty \ell^d z^\ell
\]
Then 

\noindent
{\rm (a)} The radius of convergence of the series is $1$.

\noindent
{\rm (b)} 
There exists a polynomial $G_d(z)$ such that
\begin{equation}
\label{E:Fd(z)}
F_d(z) = \frac{G_d(z)}{(1-z)^{d+1}} \qquad (|z|<1)
\end{equation}
\end{lemma}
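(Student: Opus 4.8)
The plan is to establish both parts of Lemma \ref{L:Fd(z)} by elementary power-series manipulation, starting from the geometric series and iterating a differentiation-and-multiplication operator.

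\textbf{Part (a).} The radius of convergence of $F_d(z) = \sum_{\ell \geq 0} \ell^d z^\ell$ is governed by the root test: $\limsup_{\ell \to \infty} (\ell^d)^{1/\ell} = \lim_{\ell\to\infty} \ell^{d/\ell} = 1$, since $\ell^{1/\ell} \to 1$. Hence the radius of convergence equals $1$. (For $d = 0$ this is just the geometric series, whose radius is obviously $1$.) This is the short routine step and I would dispatch it in one or two lines.

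\textbf{Part (b).} The approach is induction on $d$, exploiting the operator $T$ defined by $(Tf)(z) = z f'(z)$, which sends $\sum c_\ell z^\ell$ to $\sum \ell c_\ell z^\ell$. Concretely, $F_0(z) = \sum_{\ell \geq 0} z^\ell = \frac{1}{1-z}$ for $|z| < 1$, so $G_0(z) = 1$. Assuming $F_d(z) = G_d(z)/(1-z)^{d+1}$ for $|z| < 1$ with $G_d$ a polynomial, apply $T$: since $T$ maps $\ell^d z^\ell$ to $\ell^{d+1} z^\ell$ and power series may be differentiated termwise inside the disc of convergence, we get $F_{d+1}(z) = z\,\frac{d}{dz}\!\left(\frac{G_d(z)}{(1-z)^{d+1}}\right)$. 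Carrying out the quotient-rule differentiation gives
\[
F_{d+1}(z) = z\cdot\frac{G_d'(z)(1-z) + (d+1)G_d(z)}{(1-z)^{d+2}},
\]
so setting $G_{d+1}(z) = z\bigl(G_d'(z)(1-z) + (d+1)G_d(z)\bigr)$ — manifestly a polynomial since $G_d$ is — yields the claimed form with exponent $d+2 = (d+1)+1$. This closes the induction.

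\textbf{Main obstacle.} There is no serious obstacle here; the only point requiring a word of care is the justification that termwise differentiation is legitimate, i.e. that $T$ indeed acts as stated on the power series $F_d$ within $|z| < 1$. This follows from the standard fact that a power series is infinitely differentiable inside its disc of convergence and may be differentiated term by term there, together with part (a) which guarantees all the series $F_d$ share the open unit disc as their region of convergence. One should also note that the identity \eqref{E:Fd(z)} is asserted only for $|z| < 1$, so no analytic continuation or boundary behaviour needs to be addressed. I would therefore present Part (b) as a clean two-step induction: base case $G_0 = 1$, inductive step via the operator $T$ and the quotient rule.
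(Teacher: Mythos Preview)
Your proposal is correct and follows essentially the same approach as the paper: the paper also proves (a) in one line (it cites the ratio test rather than the root test, but this is immaterial) and proves (b) by induction on $d$ via the identity $F_{d+1}(z)=zF_d'(z)$, using the same base case $G_0=1$ and the quotient rule to produce $G_{d+1}$. Your explicit recursion $G_{d+1}(z)=z\bigl((1-z)G_d'(z)+(d+1)G_d(z)\bigr)$ is in fact the correct one; the paper's displayed formula drops a factor of $z$ on the second term, a harmless typo.
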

\begin{proof}
Part (a) follows by the ratio test.

We prove (b) by induction on $d$. When $d=0$,
\[
F_0(z) = \sum_{\ell=0}^\infty z^\ell = \frac{1}{1-z}\qquad (|z|<1)
\]
When $|z|<1$ we may differentiate term-by-term, to obtain
\[
F'_d(z) = \sum_{\ell=0}^\infty \ell^d \ell z^{\ell-1}  = \frac{1}{z}F_{d+1}(z)
\]
so
\[
F_{d+1}(z) = zF'_d(z)
\]
By the inductive hypothesis \eqref{E:Fd(z)} for $d$,
\beqn
F_{d+1}(z) &=& z \frac{\rm d}{{\rm d} z}\left( \frac{G_d(z)}{(1-z)^{d+1}} \right)\\
	&=& z \left( \frac{G'_d(z)}{(1-z)^{d+1}} +G_d(z) \frac{d+1}{(1-z)^{d+2}} \right)\\
	&=&  \frac{G_{d+1}(z)}{(1-z)^{d+2}}
\eeqn
where
\[
G_{d+1}(z) = z(1-z)G'_d(z)+(d+1)G_d(z)
\]
This completes the induction.
\end{proof}

\begin{proof}[Proof of Theorem {\rm \ref{T:BRexists_any_h}}]

Recall that $B=B(i)$ is the adjacency matrix of $\UU(i)$, and let $\lambda$
run through the eigenvalues of $B$.
By Lemma \ref{L:main_formula}, there exist polynomials 
$P_{i\lambda}(\ell)$ such that

\begin{equation}
\label{E:main_formula_dup}
a_i(\ell) = (\vc u B^\ell)_i = \sum_{\lambda} P_{i\lambda}(\ell) \lambda^\ell = S_1+S_2
\end{equation}
where we have split the sum into two parts:
\[
S_1 = \sum_{|\lambda|=\rho} P_{i\lambda}(\ell) \lambda^\ell \qquad\quad
S_2 = \sum_{|\lambda|<\rho} P_{i\lambda}(\ell) \lambda^\ell
\]
and for simplicity we write $\rho = \rho(i)$.

We claim that when $\ell$ is sufficiently large, $S_1 \neq 0$.
That is, $S_1$ is not identically zero when considered as a polynomial in $\ell$
over $\C$. The theorem is a simple consequence of this claim. We prove the claim
first, and then derive the theorem.

Let $\vc v$ be a left eigenvector of $B$ for eigenvalue $\rho$, and let
$\vc w$ be a right eigenvector of $B$ for eigenvalue $\rho$, as in Theorem \ref{T:Perron-Frobenius}(c) and (d). Both vectors are positive.

Suppose, for a contradiction, that $S_1 \equiv 0$ for all $\ell$. If so,
\eqref{E:main_formula_dup} implies that
\begin{equation}
\label{E:S2eq}
(\vc u B^\ell)_i=a_i(\ell) = S_2 = \sum_{|\lambda|<\rho} P_{i\lambda}(\ell) \lambda^\ell
\end{equation}
which is obviously of order $o(\rho^\ell)$. We show
that this contradicts the C\'esaro average formula \eqref{E:cesaro}.

By Theorem \ref{T:Perron-Frobenius}(f) the matrix $ \vc v \vc w^{\rm T}$ is positive.
By Theorem \ref{T:Perron-Frobenius}(g),
\[
\vc v \vc w^{\rm T} = \lim_{k \to \infty} \frac{1}{k}\sum_{\ell =0}^k B^\ell \rho^{-\ell}
\]
Therefore, by \eqref{E:S2eq},
\[
\left(\vc u(\vc v \vc w^{\rm T})\right)_i = \left(\lim_{k \to \infty} \frac{1}{k}\sum_{\ell =0}^k \vc u B^\ell \rho^{-\ell}\right)_i
	=\sum_{|\lambda|<\rho}  \left(\lim_{k \to \infty} \frac{1}{k} \sum_{\ell =0}^k P_{i\lambda}(\ell)\left(\frac{\lambda}{\rho}\right)^{\ell}\right).
\]
We claim that the term inside parentheses in the right-hand expression
vanishes for each $\lambda$.
To see why, fix $\lambda$ and let $z_\lambda = \lambda/\rho$. 
Choose $d \in \N$ such that $P_{i\lambda}(\ell) < \ell^d$ for sufficiently
large $\ell$; this can be done by taking $d$ to be greater than the maximal degree
of the $P_{i\lambda}(\ell)$. Now
\beqn
|(\vc u(\vc v \vc w^{\rm T}) )_i | &\leq& \sum_{|\lambda|<\rho}  \left(\lim_{k \to \infty} \frac{1}{k} \sum_{\ell =0}^k |P_{i\lambda}(\ell)|\left|\frac{\lambda}{\rho}\right|^{\ell}\right)\\
	&\leq & \sum_{|\lambda|<\rho}  \left(\lim_{k \to \infty} \frac{1}{k} \sum_{\ell =0}^k \ell^d |z_\lambda|^\ell\right).
\eeqn
Since the eigenvalues $\lambda$ occurring in this sum
satisfy $|\lambda| < \rho$, we have $|z_\lambda| < 1$, within the radius of convergence of the series \eqref{E:Fd(z)}.
By Lemma \ref{L:Fd(z)}(b) the sum to infinity of this series equals $G_d(z_\lambda)(1-z_\lambda)^{-d-1}$.
Therefore the sum to $k$ terms is as close as we wish to this value,
so in particular there exists $K$ such that when $|z_\lambda| < 1$,
\[
\left|  \sum_{\ell =0}^k \ell^d |z_\lambda|^\ell \right| \leq 2 G_d(z_\lambda)(1-z_\lambda)^{-d-1} \qquad (k \geq K)
\]
Now
\[
\lim_{k \to \infty} \frac{1}{k} \sum_{\ell =0}^k \ell^d |z_\lambda|^\ell = 
	\lim_{k \to \infty} \frac{1}{k}\left(\frac{2 G_d(z_\lambda)}{(1-z_\lambda)^{d+1}}\right) = 0
\]
since $z_\lambda$ is constant for a given $\lambda$. This implies that
\[
(\vc u(\vc v \vc w^{\rm T}) )_i = 0
\]
However, the vector $\vc u$ and the matrix $\vc v \vc w^{\rm T}$ are both positive,
so $\vc u(\vc v \vc w^{\rm T})$ is positive. This is a contradiction,
which proves the claim that $S_1$ does not vanish identically.

Having proved the claim, we deduce the theorem. The sum $S_2$ has
order $o(\rho^\ell)$ since all $\lambda$ occurring in $S_2$
satisfy $|\lambda| < \rho$. Therefore
$a_i(\ell) = S_1 + o(\rho^\ell)$.
As in the proof of Lemma \ref{L:main_formula}, there exists an eventually positive
polynomial $Q(\ell)$ such that
\[
a_i(\ell) \geq Q(\ell)\rho^\ell
\]
for sufficiently large $\ell$. On the other hand, the final step in the proof of
Lemma \ref{L:upper_bound}
implies that there exists an eventually positive polynomial
$\ell^r$, for suitably large $r$, such that
\begin{equation}
\label{E:R_bound}
a_i(\ell) \leq \ell^r\rho^\ell
\end{equation}
for sufficiently large $\ell$. Now Lemma \ref{L:bounds} implies parts (a) and (b)
of the theorem. 
Part (c) follows since we are assuming $B$ is irreducible,
so $\UU(i)$ is strongly connected.
\end{proof}

\section{Branching Ratios in the General Case}
\label{S:BRGC1}

With very little extra effort we can parlay Theorem \ref{T:BRexists_any_h} into
a more general result that applies to any network, without assuming irreducibility.
We first need the following:
\begin{definition}\em
\label{D:upstream_SCC}
An {\em upstream SCC} of node $i$ is a 
strongly connected component (SCC) of $\GG$ that contains a node upstream from $i$.
\end{definition}

\begin{theorem}
\label{T:gencase}
Let $i$ be any node of a network $\GG$. Then:

{\rm (a)} Node $i$ has a well-defined branching ratio $\delta(i) \geq 0$.

{\rm (b)} The value of $\delta(i)$ is the maximum value of the $\delta(j)$
as $j$ runs over all the upstream SCCs of $i$.

{\rm (c)} The value of $\delta(i)$ is equal to
the Perron eigenvalue $\rho(i)$ of the
adjacency matrix $A$ of the upstream subgraph $\UU(i)$. 
\end{theorem}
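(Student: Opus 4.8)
The plan is to reduce Theorem~\ref{T:gencase} to the strongly connected case (Theorem~\ref{T:BRexists_any_h}) via the Upstream Principle (Proposition~\ref{P:upstream_principle}), using the block structure of $\UU(i)$ established in Proposition~\ref{P:upstream_properties}. First I would fix $i$ and pass to $\UU(i)$, which by Proposition~\ref{P:upstream_properties}(3) does not change $a_i(\ell)$; if $\UU(i)$ is acyclic then $a_i(\ell)=0$ eventually, so $\delta(i)=0=\rho(i)$ by Proposition~\ref{P:acyclic}, and we are done. So assume $\UU(i)$ is not acyclic. By Proposition~\ref{P:upstream_properties}(4),(5), the SCCs of $\UU(i)$ are exactly the upstream SCCs of $i$, and by (8),(9) the spectrum of $B(i)$ is the union of the spectra of the diagonal blocks $B_1,\dots,B_p$ in \eqref{E:block_B}; hence $\rho(i) = \max_j \rho(B_j)$, where $\rho(B_j)$ is the Perron eigenvalue of the $j$-th SCC. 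Since each $B_j$ is the adjacency matrix of an SCC and hence irreducible, Theorem~\ref{T:BRexists_any_h} applies to each of these SCCs in isolation: every node of the $j$-th SCC has branching ratio $\rho(B_j)$ \emph{within that SCC}.

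Next I would establish the lower bound $a_i(\ell) \geq Q(\ell)\rho(i)^\ell$ for an eventually positive polynomial $Q$. Pick an SCC, say the $j^\ast$-th, with $\rho(B_{j^\ast}) = \rho(i)$, and pick any node $k$ in it. Counting only those walks that stay inside this SCC, the number of length-$\ell$ walks terminating at $k$ within $\UU(i)$ is at least the number computed inside the SCC alone, which by Theorem~\ref{T:BRexists_any_h}(a),(b) is $\geq \widehat Q(\ell)\rho(i)^\ell$ for some eventually positive $\widehat Q$ (this is exactly the lower bound extracted in the proof of that theorem). Hence $k$ is $\rho(i)$-bounded in $\GG$. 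Since $k$ is upstream from $i$, the Upstream Principle (Proposition~\ref{P:upstream_principle}) gives that $i$ is $\rho(i)$-bounded. Combined with the upper bound $a_i(\ell) \leq \ell^r \rho(i)^\ell$ from Lemma~\ref{L:upper_bound}, Lemma~\ref{L:bounds} yields that $\delta(i)$ exists and equals $\rho(i)$, proving (a) and (c).

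For part (b), I would argue both inequalities. For ``$\geq$'': for each upstream SCC, represented by a node $j$ in it, Proposition~\ref{P:same_P_ev} (or directly Theorem~\ref{T:BRexists_any_h}) shows $\delta(j) = \rho(B_j)$; by the Upstream Principle, since $j$ is upstream from $i$ and $j$ is $\rho(B_j)$-bounded, $i$ is $\rho(B_j)$-bounded, so $\delta(i) \geq \rho(B_j)$; taking the max over all upstream SCCs gives $\delta(i) \geq \max_j \rho(B_j) = \rho(i)$. For ``$\leq$'': this is just $\delta(i) = \rho(i) = \max_j \rho(B_j)$ from part (c) together with the identification $\rho(i)=\max_j\rho(B_j)$ noted above. (One should note that the $\delta(j)$ in the statement of (b), for $j$ ranging over upstream SCCs, means the common branching ratio of the nodes of that SCC, which is well defined by Theorem~\ref{T:BRexists_any_h}(c) and Proposition~\ref{P:same_P_ev}.)

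The main obstacle is the lower bound step: one must be careful that restricting attention to walks confined to a single SCC really does give a valid lower bound on $a_k(\ell)$ computed in $\UU(i)$ --- this is immediate since such confined walks are a subset of all walks terminating at $k$ --- and that Theorem~\ref{T:BRexists_any_h} genuinely supplies an eventually positive polynomial lower bound rather than merely the limit statement \eqref{E:d}. The latter is exactly what the proof of Theorem~\ref{T:BRexists_any_h} produces (via $a_i(\ell) = S_1 + o(\rho^\ell)$ with $S_1$ not identically zero), so invoking that intermediate bound rather than only the final limit is the key point to get right. Everything else is bookkeeping with the block-triangular structure and the already-established Upstream Principle.
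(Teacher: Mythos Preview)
Your proposal is correct and follows essentially the same route as the paper: identify $\rho(i)$ as the maximum of the Perron eigenvalues of the upstream SCCs via the block-triangular structure, pick an SCC achieving this maximum, invoke Theorem~\ref{T:BRexists_any_h} to get a $\rho(i)$-lower bound at a node of that SCC, transfer it to $i$ by the Upstream Principle, and combine with Lemma~\ref{L:upper_bound} and Lemma~\ref{L:bounds}. Your explicit care about needing the polynomial lower bound from \emph{inside} the proof of Theorem~\ref{T:BRexists_any_h} (rather than just the limit~\eqref{E:d}) and about why restricting to walks confined to the SCC is a legitimate lower bound are both on point and, if anything, make the argument cleaner than the paper's terse version.
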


Before giving the proof we prove a lemma.

\begin{lemma}
\label{L:upstream_ev}
$ $

{\rm (a)} The SCCs of $\UU(i)$ are the upstream SCCs of $i$.

{\rm (b)} $\UU(i)$ is the subgraph induced by the union of all
upstream SCCs of $i$.

{\rm (c)} The eigenvalues of the adjacency matrix of $\UU(i)$ are
the union (counting multiplicities) of the eigenvalues of all
upstream SCCs of $i$.

{\rm (d)} The Perron eigenvalue $\rho(i)$ is the maximum of
the Perron eigenvalues of the upstream SCCs of $i$.
\end{lemma}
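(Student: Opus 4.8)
The plan is to prove the four parts of Lemma \ref{L:upstream_ev} essentially by unwinding the definitions, using the structural facts about upstream subnetworks already collected in Proposition \ref{P:upstream_properties}. None of the steps should require any genuinely new idea; the work is in assembling the pieces in the right order, since parts (c) and (d) will be immediate consequences of (a) and (b) together with block-triangular spectral decomposition.

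First I would prove (a). The SCCs of $\UU(i)$ are, by Proposition \ref{P:upstream_properties}(5), SCCs of $\GG$, and by Proposition \ref{P:upstream_properties}(4) every such SCC consists of vertices that lie in $\UU(i)$, i.e. vertices upstream from $i$ (or equal to $i$, which we treat as upstream from itself via the length-$0$ walk). Conversely, if $X$ is an SCC of $\GG$ containing a node $j$ upstream from $i$, then every vertex of $X$ is upstream from $j$ (strong connectivity of $\GG[X]$) and hence upstream from $i$ by concatenating walks; so all of $X$ lies in $\UU(i)$, and since $\GG[X]$ is a maximal strongly connected induced subgraph of $\GG$ it is also a maximal strongly connected induced subgraph of $\UU(i)$ — that is, an SCC of $\UU(i)$. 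This gives the set equality in (a). Part (b) then follows immediately: $\UU(i) = \GG[\XX_i]$ by definition, $\XX_i$ is the union of all SCCs of $\GG$ it meets (Proposition \ref{P:upstream_properties}(4)), and by (a) those are exactly the upstream SCCs of $i$; so $\UU(i)$ is the induced subgraph on the union of the upstream SCCs.

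For (c), I would invoke the block upper triangular form of $B(i)$ from Proposition \ref{P:upstream_properties}(7), whose diagonal blocks $B_1,\dots,B_p$ are the adjacency matrices of the SCCs of $\UU(i)$ (after ordering the nodes compatibly with the feedforward structure). By Proposition \ref{P:upstream_properties}(8) the spectrum of $B(i)$ — with multiplicities, since the characteristic polynomial of a block triangular matrix is the product of those of the diagonal blocks — is the union of the spectra of the $B_j$, and by (a) these $B_j$ are precisely the adjacency matrices of the upstream SCCs of $i$. That is exactly statement (c). Finally, (d): the Perron eigenvalue $\rho(i)$ is the spectral radius of $B(i)$, which by (c) is the maximum over the upstream SCCs of the spectral radii of their adjacency matrices; each such maximum is attained by a non-negative real eigenvalue (Definition \ref{D:spectrum}, or Perron--Frobenius for the irreducible blocks), so $\rho(i) = \max_j \rho(\GG[X_j])$ over the upstream SCCs $X_j$, which is the claim.

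I do not anticipate a serious obstacle here; the one point that needs a little care is the boundary convention in (a)–(b), namely that node $i$ itself is considered upstream from $i$ (consistent with $\XX_i$ containing $i$ via the trivial walk, per Definition \ref{D:upstream_sub} and Definition \ref{D:upstream_SCC}), so that the SCC of $\GG$ containing $i$ is counted among the upstream SCCs; without this the statements would be off by that component. A second minor point is to be explicit that in (c) ``union counting multiplicities'' is legitimate because determinants of block triangular matrices factor, so no eigenvalue is lost or duplicated incorrectly when passing between $B(i)$ and its diagonal blocks. Everything else is bookkeeping on top of Proposition \ref{P:upstream_properties}.
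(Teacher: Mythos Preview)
Your proposal is correct and follows essentially the same route as the paper's own proof: (a) and (b) are unwound from the definitions and the structural facts in Proposition~\ref{P:upstream_properties}, while (c) and (d) come from the block upper triangular form of $B(i)$ and the factorisation of its characteristic polynomial over the diagonal blocks. Your version is simply more explicit than the paper's, which dispatches (a)--(b) in one line and declares (c)--(d) obvious from the block structure; the extra care you take with the boundary convention for $i$ and with multiplicities in (c) is appropriate and not a deviation.
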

\begin{proof}
Parts (a) and (b) follow immediately since each SCC is strongly connected.

For parts (c) and (d), the acyclic structure of the condensation of $\UU(i)$
implies that for a suitable ordering of blocks 
and a compatible ordering of nodes, the adjacency matrix
$B=B(i)$ of $\UU(i)$ is upper block-triangular, see
\cite[Theorem 4.11]{GS23}. With such an ordering, 
\begin{equation}
\label{E:Bupper_tri}
B = \Matrix{B_1 & \ast & \ast & \ldots & \ast \\
0 & B_2 & \ast & \ldots & \ast \\
0  & 0  & B_3 & \ldots & \ast \\
\vdots & \vdots &\vdots &\ddots &\vdots \\
0  & 0  & 0  & \ldots & B_r}
\end{equation}
where $r$ equals the number of SCCs of $\UU(i)$. Now (c)
and (d) are obvious.
\end{proof}

\begin{proof}[Proof of Theorem {\rm \ref{T:gencase}}]

The Perron eigenvalue $\rho(i)$ of $\UU(i)$ is equal to the Perron
eigenvalue of some SCC of $\UU(i)$. Let $S$ denote this SCC.
$S$ is obviously irreducible, so $S$ is $\rho(i)$-bounded by \eqref{E:R_bound}.
 By The Upstream Principle,
node $i$ is $\rho(i)$-bounded, as well. Therefore the branching ratio $\delta(i)$ of
node $i$ as the limit \eqref{E:d} is defined, proving (a).
Also $\delta(i) = \rho(i)$, proving (c). Finally,
(b) follows from Lemma \ref{L:upstream_ev}.
\end{proof}

\begin{corollary}
\label{C:rho-bounded}
For any network $\GG$ and node $i$, the branching ratio $\rho(i) = \delta$
if and only if there exist eventually positive polynomials $P(\ell)$
and $Q(\ell)$
such that $Q(\ell)\delta^\ell \leq a_i(\ell) \leq P(\ell)\delta^\ell$
for sufficiently large $\ell$.
\qed\end{corollary}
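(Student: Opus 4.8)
The plan is to prove Corollary \ref{C:rho-bounded} as a straightforward consequence of the machinery already in place, namely Lemma \ref{L:bounds}, Lemma \ref{L:upper_bound}, and Theorem \ref{T:gencase}. I would split the proof into the two implications.

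For the forward direction, assume $\rho(i) = \delta$. I would first invoke Lemma \ref{L:upper_bound} to obtain the upper bound $a_i(\ell) \leq P(\ell)\rho(i)^\ell = P(\ell)\delta^\ell$ for an eventually positive polynomial $P$ and for sufficiently large $\ell$. For the lower bound, I would invoke Theorem \ref{T:gencase}: its proof establishes that node $i$ is $\rho(i)$-bounded, which is precisely the existence of an eventually positive polynomial $Q$ with $a_i(\ell) \geq Q(\ell)\rho(i)^\ell = Q(\ell)\delta^\ell$ for sufficiently large $\ell$. (If $\UU(i)$ is acyclic, $\delta = 0$ and $a_i(\ell) = 0$ eventually, so one takes, say, $Q(\ell) = P(\ell) = 1$ and the inequalities $0 \leq 0 \leq 0$ hold trivially for large $\ell$; I would note this degenerate case briefly.)

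For the converse direction, assume such $P$ and $Q$ exist with $Q(\ell)\delta^\ell \leq a_i(\ell) \leq P(\ell)\delta^\ell$ for sufficiently large $\ell$, where $\delta > 0$. Then Lemma \ref{L:bounds} applies verbatim and yields that the limit \eqref{E:d} exists and equals $\delta$; since by definition $\delta(i)$ is that limit, we get $\delta(i) = \delta$, and by Theorem \ref{T:gencase}(c) we have $\delta(i) = \rho(i)$, hence $\rho(i) = \delta$. One subtlety: the statement as written presumes $\delta > 0$ implicitly through the use of $\delta^\ell$ and the extraction of $\ell$-th roots in Lemma \ref{L:bounds}; if $\delta = 0$ is to be allowed, the bounds force $a_i(\ell) = 0$ eventually, so $\UU(i)$ is acyclic and $\rho(i) = 0 = \delta$ by Proposition \ref{P:acyclic}, so the equivalence still holds. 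I would mention this only in passing.

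There is no real obstacle here: the corollary merely repackages results already proved. The only thing requiring a moment's care is making sure the phrase ``the branching ratio $\rho(i) = \delta$'' is read correctly — it asserts the equality of the Perron eigenvalue of $\UU(i)$ with the quantity $\delta$ appearing in the bounds — and that both directions are genuinely covered by the cited lemmas rather than one of them being only a partial converse. Accordingly the write-up can be just a few lines, citing Lemma \ref{L:upper_bound} and (the proof of) Theorem \ref{T:gencase} for one direction and Lemma \ref{L:bounds} together with Theorem \ref{T:gencase}(c) for the other.
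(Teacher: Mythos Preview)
Your proposal is correct and matches the paper's approach: the paper states the corollary with an immediate \qed, treating it as a direct consequence of Lemma~\ref{L:bounds}, Lemma~\ref{L:upper_bound}, and (the proof of) Theorem~\ref{T:gencase}, exactly as you outline. Your handling of the degenerate acyclic case is a nice clarification that the paper does not spell out.
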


We also deduce a well-known result:
\begin{corollary}
\label{C:rho-values}
The Perron eigenvalue of any non-negative integer matrix
is either zero or greater than $1$.
\qed\end{corollary}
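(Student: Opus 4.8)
The plan is to derive Corollary \ref{C:rho-values} directly from Corollary \ref{C:rho-bounded} together with Theorem \ref{T:gencase}, viewing the integer matrix as an adjacency matrix of a network. First I would reduce to the relevant case: if $M$ is a non-negative integer matrix whose Perron eigenvalue $\rho$ is nonzero, then by Proposition \ref{P:acyclic} the associated network $\GG$ (with an edge of multiplicity $m_{ij}$ from $i$ to $j$) is not acyclic, so it contains a directed cycle. Pick a node $i$ lying on such a cycle; then $\UU(i)$ contains that cycle, hence is not acyclic, and its Perron eigenvalue $\rho(i)$ is nonzero. Moreover $\rho(i) \le \rho(M) = \rho$, so it suffices to show $\rho(i) \ge 1$ for at least one such $i$, and in fact we can choose $i$ so that $\rho(i)=\rho$: take $i$ in an SCC of $\GG$ whose own Perron eigenvalue realises $\rho$ (such an SCC exists by block-triangularity, as in Proposition \ref{P:upstream_properties}(8)).

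Next I would use the combinatorial meaning of $a_i(\ell)$. Since $i$ lies on a directed cycle of some length $c \ge 1$ in $\GG$, concatenating that cycle with itself shows $a_i(\ell c) \ge 1$ for all $\ell \ge 0$; more simply, $a_i(\ell) \ge 1$ for every $\ell$ that is a multiple of $c$, and since $a_i$ is non-decreasing along the cycle (any walk terminating at $i$ of length $\ell$ extends to one of length $\ell+c$), in fact $a_i(\ell) \ge 1$ for all $\ell \ge 0$. Now suppose for contradiction that $\rho := \rho(i) < 1$. By Theorem \ref{T:gencase}(c) the branching ratio $\delta(i)$ equals $\rho$, so by Corollary \ref{C:rho-bounded} there is an eventually positive polynomial $P(\ell)$ with $a_i(\ell) \le P(\ell)\rho^\ell$ for all sufficiently large $\ell$. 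But $\rho < 1$ forces $P(\ell)\rho^\ell \to 0$ as $\ell \to \infty$, while $a_i(\ell) \ge 1$; for large $\ell$ this is impossible. Hence $\rho \ge 1$, and therefore $\rho(M) \ge \rho(i) \ge 1$, completing the proof.

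Alternatively — and perhaps more cleanly — one can avoid Corollary \ref{C:rho-bounded} and argue straight from the definition \eqref{E:d}: with $i$ chosen as above, $a_i(\ell) \ge 1$ for all $\ell$ gives $(a_i(\ell))^{1/\ell} \ge 1$, so the limit $\delta(i) = \lim_{\ell\to\infty}(a_i(\ell))^{1/\ell}$ (which exists by Theorem \ref{T:gencase}(a)) is at least $1$; and $\delta(i) = \rho(i) \le \rho(M)$ by Theorem \ref{T:gencase}(c) and Corollary \ref{C:smaller_P_ev}. I expect the only real subtlety to be the bookkeeping in choosing the node $i$: one must ensure that $i$ both lies on a cycle (so that $a_i(\ell) \ge 1$) and sits in a part of the graph whose Perron eigenvalue equals $\rho(M)$, so that the inequality chain closes up to give $\rho(M) \ge 1$ rather than merely $\rho(M) \ge \rho(i)$ for some possibly smaller $\rho(i)$. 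The easiest fix is: let $S$ be an SCC of $\GG$ with $\rho(S) = \rho(M)$ (exists since the spectrum of $M$ is the union of the spectra of the SCCs), note $S$ is not a single node without a self-loop (else $\rho(S)=0\ne\rho(M)$), pick any $i \in S$, and observe $S \subseteq \UU(i)$ so $\rho(i) \ge \rho(S) = \rho(M)$, while also $\rho(i) \le \rho(M)$, giving $\rho(i) = \rho(M)$; since $S$ is strongly connected with an edge, $i$ lies on a cycle and the argument above applies verbatim.
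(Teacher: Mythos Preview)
Your proposal is correct, and your alternative approach is exactly the paper's argument: the paper's proof reads in full ``If the branching ratio is nonzero then $a_i(\ell) \geq 1$ for all $\ell$. So $a_i(\ell)^{1/\ell} \geq 1$.'' Your version is more carefully written than the paper's, which leaves implicit both the passage from $\rho(M)$ to some $\rho(i)$ and the reason why $a_i(\ell)\ge 1$.

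Two minor remarks. First, the careful choice of $i$ with $\rho(i)=\rho(M)$ in your final paragraph is not actually needed: once you have \emph{any} $i$ on a cycle, $a_i(\ell)\ge 1$ gives $\delta(i)=\rho(i)\ge 1$, and then $\rho(M)\ge\rho(i)\ge 1$ by Corollary~\ref{C:smaller_P_ev}; you already observe this yourself (``it suffices to show $\rho(i)\ge 1$''), so the subsequent refinement is harmless but superfluous. Second, your justification that $a_i(\ell)\ge 1$ for \emph{all} $\ell$ is slightly loose: the step ``extends to one of length $\ell+c$'' only gives $a_i(kc)\ge 1$. The quickest fix is to note that walking backwards $\ell$ steps along the cycle (wrapping as needed) yields a walk of length $\ell$ terminating at $i$ for every $\ell\ge 0$. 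Your first approach via Corollary~\ref{C:rho-bounded} is a valid but slightly longer route to the same conclusion.
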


\begin{proof}
If the branching ratio is nonzero then $a_i(\ell) \geq 1$
for all $\ell$. So $a_i(\ell)^{1/\ell} \geq 1$.
\end{proof}

\section{Asymptotics}
\label{S:A}

We have now established the basic properties of the branching ratio,
for all networks. We now proceed in a new direction: finding
more precise asymptotic expressions for $a_i(\ell)$ under different
hypotheses.

Our main results are:

(a) If $\UU(i)$ is irreducible of period $h$, and is not acyclic,
then for every $0 \leq r \leq h-1$, there exist $C_r>0$, depending on $i$, 
such that $a_i(\ell) \sim C_r\rho^\ell$ when $r = \ell \pmod{h}$.
See Theorem \ref{T:periodic_asymp}.

(b) As a corollary, if $\UU(i)$ is irreducible of period 1, and is not acyclic,
then there exists $C>0$, depending on $i$, such that $a_i(\ell) \sim C\rho^\ell$.
See Corollary \ref{C:asymp1}.

(c) Define $g$ to be the
least  common multiple of the periods of those adjacency matrices
of the SCCs of $\UU(i)$ that have Perron eigenvalue $\rho$.
Then there exist eventually positive real polynomials 
$R_0(\ell),R_1(\ell),\ldots, R_{g-1}(\ell)$ such that
$a_i(\ell) \sim R_s(\ell)\rho^\ell$  where $s = \ell \pmod{g}$.
See Theorem \ref{T:asymp_general}.

\subsection{Asymptotics for Irreducible Adjacency Matrices}
\label{S:asymp_h}

We begin with the irreducible case.

\begin{theorem}
\label{T:periodic_asymp}
Let the adjacency matrix $A$ of $\GG$ be irreducible with period $h$ and Perron eigenvalue $\rho$.
Then there exist constants $C_r > 0$,
for $0 \leq r \leq h-1$, such that when $\ell \pmod{h}=r$ then
\[
a_i(\ell) \sim C_r \rho^r
\]
These constants are independent of $i$.
\end{theorem}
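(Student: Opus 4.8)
The plan is to deduce the asymptotics from Perron--Frobenius applied to the \emph{$h$-th power} of $A$, where the $h$ peripheral eigenvalues $\zeta^{m}\rho$ of $A$ (Theorem~\ref{T:Perron-Frobenius}(h)) collapse into a single simple Perron eigenvalue $\rho^{h}$ of each cyclic block; this is what prevents the cancellation flagged in the Introduction. Since $\GG$ is strongly connected, $\UU(i)=\GG$ and $a_i(\ell)=(\vc u A^{\ell})_i$ by Lemma~\ref{L:in_sum}. Also $A$, being irreducible, is nonzero, so $\GG$ has an edge; with strong connectivity this makes every node have in-degree $\geq 1$, so a trivial walk at any node $t$ can be extended backwards indefinitely and $a_t(r)\geq 1$ for all $t$ and all $r\geq 0$. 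First I would invoke the standard cyclic decomposition of an irreducible matrix of period $h$: the vertices split as $\VV=C_0\cup\dots\cup C_{h-1}$ with every edge running from some $C_k$ to $C_{k+1\bmod h}$ (part of Perron--Frobenius theory; see the references for Theorem~\ref{T:Perron-Frobenius}). Then $A^{h}$ is block diagonal for this partition, with diagonal blocks $A_j:=A^{h}|_{C_j}$, and each $A_j$ is irreducible and \emph{primitive} (period $1$); its Perron eigenvalue is $\rho^{h}$, because restricting the positive right Perron eigenvector of $A$ to $C_j$ gives a positive eigenvector of $A_j$, forcing its eigenvalue $\rho^{h}$ to be the Perron eigenvalue of $A_j$.

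The quantitative heart is the limit
\[
\rho^{-hk}A_j^{k}\ \longrightarrow\ \Pi_j \qquad (k\to\infty),
\]
where $\Pi_j$ is a \emph{strictly positive} rank-one matrix. Indeed, $A_j$ being primitive, Theorem~\ref{T:Perron-Frobenius}(h) applied to $A_j$ shows $\rho^{h}$ is its only eigenvalue of maximal modulus and is simple; the Jordan-form computation underlying Lemma~\ref{L:main_formula}, carried out for $A_j$ itself, then gives $A_j^{k}=\rho^{hk}\Pi_j+o(\rho^{hk})$ with no polynomial factor on the leading term, and $\Pi_j$ must equal the rank-one Ces\`aro limit of Theorem~\ref{T:Perron-Frobenius}(g) --- the outer product of the positive left and right Perron eigenvectors of $A_j$ --- which is strictly positive by parts (c), (d), (f). Now fix $i$ and let $b$ be the index with $i\in C_b$. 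For $\ell=hk+r$ with $0\leq r\leq h-1$, a walk of length $\ell$ terminating at $i$ is the concatenation of a length-$r$ walk terminating at some node $t$ with a length-$hk$ walk from $t$ to $i$; since $hk\equiv 0\pmod h$ this forces $t\in C_b$, and the length-$hk$ walks from $t$ to $i$ are counted by $(A_b^{k})_{ti}$, so
\[
a_i(hk+r)\ =\ \sum_{t\in C_b} a_t(r)\,(A_b^{k})_{ti}.
\]
Dividing by $\rho^{hk}$ and letting $k\to\infty$,
\[
\rho^{-hk}\,a_i(hk+r)\ \longrightarrow\ \sum_{t\in C_b} a_t(r)\,(\Pi_b)_{ti}\ =:\ D_r(i),
\]
and $D_r(i)>0$ since $C_b\neq\emptyset$ and every summand is strictly positive ($(\Pi_b)_{ti}>0$ and $a_t(r)\geq 1$). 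Setting $C_r:=\rho^{-r}D_r(i)>0$ gives $a_i(\ell)\sim C_r\rho^{\ell}$ whenever $\ell\equiv r\pmod h$, which is the theorem.

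The step needing the most care is the displayed limit $\rho^{-hk}A_j^{k}\to\Pi_j$ with $\Pi_j$ \emph{strictly} positive: Theorem~\ref{T:Perron-Frobenius} records only the Ces\`aro average, so one must first note that for a \emph{primitive} matrix the ordinary limit of $\rho^{-hk}A_j^{k}$ exists --- using simplicity of $\rho^{h}$ and the strict sub-dominance of all other eigenvalues of $A_j$ --- whence it coincides with the Ces\`aro limit. This is precisely the mechanism that kills the ``dominant terms might cancel'' difficulty: once the leading term is a single strictly positive rank-one operator applied to the strictly positive vector $(a_t(r))_{t\in C_b}$, there is nothing to cancel; by contrast, the argument in the proof of Theorem~\ref{T:BRexists_any_h} only yields that the peripheral part is not \emph{identically} zero, which would not rule out $C_r=0$ for some residues. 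The remaining ingredients (the cyclic decomposition, block-diagonality of $A^{h}$, primitivity of the blocks, and the walk-splitting identity) are routine. Finally I would note that the formula $C_r=\rho^{-r}\sum_{t\in C_b}a_t(r)(\Pi_b)_{ti}$ exhibits $C_r$ as depending on $i$ in general (through $b$, $\Pi_b$, and the counts $a_t(r)$), consistent with the list of results opening Section~\ref{S:A} and with Examples~\ref{no_ratio} and~\ref{E:polycycle}; the closing sentence of the theorem is thus best understood as saying that, for fixed $i$ and residue $r$, one constant $C_r$ governs all $\ell\equiv r\pmod h$.
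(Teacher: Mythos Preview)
Your proof is correct and takes a genuinely different route from the paper's. The paper works directly with the spectral decomposition of $A$: it writes $a_i(\ell)=S_1(\ell)+S_2(\ell)$ with $S_1$ the contribution of the $h$ peripheral eigenvalues $\zeta^k\rho$, observes that $S_1(\ell)=C_{\ell\bmod h}\,\rho^\ell$ for constants $C_r=\sum_k\zeta^{kr}(\vc u^{\lambda_k})_i$, and then invokes the already-proved Theorem~\ref{T:gencase} (hence the Ces\`aro argument of Theorem~\ref{T:BRexists_any_h}) to rule out $C_r=0$. You instead pass to the cyclic decomposition and the primitive diagonal blocks of $A^h$, where the ordinary (not merely Ces\`aro) limit $\rho^{-hk}A_b^{k}\to\Pi_b>0$ is available; combined with the walk-splitting identity $a_i(hk+r)=\sum_{t\in C_b}a_t(r)(A_b^{k})_{ti}$ and $a_t(r)\geq 1$, this yields $C_r>0$ as a sum of strictly positive terms. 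The trade-offs: your argument is self-contained (it does not need Theorems~\ref{T:BRexists_any_h} or~\ref{T:gencase} as input), gives an explicit positive formula for $C_r$, and makes the positivity transparent for \emph{each} residue $r$ separately, whereas the paper's approach reuses prior machinery and displays $C_r$ as a character sum in the peripheral eigenvectors. Your closing remark about the clause ``independent of $i$'' is apt: the paper's own summary at the start of Section~\ref{S:A} and Example~\ref{ex:fibo} (equation~\eqref{E:fibo_asymp}) show the constants do depend on $i$, so the statement should be read as you suggest.
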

\begin{proof}
Since $A$ is irreducible, $\GG$ is strongly connected, so $\UU(i)=\GG$ for all $i$.
The Upstream Principle then implies that the
asymptotic behaviour of $a_i(\ell)$ is the same for all $i$. If $\UU(i)$
is acyclic, $\rho = 0$ and any constants $C_r > 0$ can be used.
Therefore we can assume that $\UU(i)$ is not acyclic.

By \eqref{E:split_dominant}, 
\[
a_i(\ell) =S_1(\ell)+S_2(\ell)
\]
where
\[
S_1(\ell) =  \sum_{|\lambda|=\rho} (\vc u^\lambda.B^\ell)_i,\qquad 
S_2(\ell) = \sum_{|\lambda|<\rho} (\vc u^\lambda.B^\ell)_i
\]
For simplicity, write $\rho(i)=\rho$. 
By Theorem \ref{T:gencase}, $\delta(i) =\rho$. Therefore $S_1(\ell)$ does not
vanish, because $S_2(\ell) = o(\rho^\ell)$. Now $a_i(\ell) \sim S_1(\ell)$, and
we seek an expression for $S_1(\ell)$.

By Theorem \ref{T:Perron-Frobenius}(h), the eigenvalues $\lambda$
with $|\lambda|= \rho$ are
\[
\lambda = \rho, \zeta\rho,  \zeta^2\rho, \ldots,  \zeta^{h-1}\rho, 
\]
and these eigenvalues are simple.
Let $\lambda_k = \zeta^k\rho$ for $0 \leq k \leq h-1$, with eigenvector
$\vc u^{\lambda_k}$. Now 
\[
\vc  u^{\lambda_k} B = \lambda_k \vc  u^{\lambda_k} = \zeta^k\rho\, \vc  u^{\lambda_k} \quad {\rm so} \quad
	\vc  u^{\lambda_k}B^\ell = \lambda_k^\ell  \vc u^{\lambda_k} = \zeta^{k\ell}\rho^{\ell}\,  \vc u^{\lambda_k}
\]
Therefore
\begin{equation}
\label{E:asymp_h}
a_i(\ell) \sim S_1(\ell) = \left(\sum_{k=0}^{h-1} \zeta^{k\ell} \vc  u^{\lambda_k}_i \right)\rho^{\ell} = C_\ell \rho^\ell
\end{equation}
where
\[
C_\ell = \sum_{k=0}^{h-1} \zeta^{k\ell} \vc  u^{\lambda_k}_i \in \C
\]
However, the non-real eigenvalues $\lambda_k$ and their eigenvectors 
$\vc  u^{\lambda_k}$ occur in conjugate pairs, so actually $C_\ell$ is real.

Since $\zeta^h = 1$ we have
\[
C_{\ell+h)} = \sum_{k=0}^{h-1} \zeta^{k(\ell+h)} \vc u^{\lambda_k}_i 
	=  \sum_{k=0}^{h-1} \zeta^{k\ell} \vc u^{\lambda_k}_i 
	= C_\ell
\]
so $C_\ell$ depends only on $\ell \pmod{h}$. That is, if
$\ell = qh+r$, where $0 \leq r \leq h-1$, we can define $C_\ell = C_r$,
and \eqref{E:asymp_h} remains valid. That is,
\begin{equation}
\label{E:asymp_h2}
a_i(\ell) \sim C_r\rho^\ell\ \mbox{where}\ r = \ell \pmod{h}
\end{equation}
Finally, the $C_r$ must be positive by \eqref{E:asymp_h2}.
\end{proof}

Example \ref{no_ratio} shows that when $h>1$, precise asymptotics may need
$h$ different constant factors. In that example 
these constants are in geometric progression, but 
Example \ref{E:polycycle} shows that this need not happen.

The period-1 case is worth being stated explicitly:
\begin{corollary}
\label{C:asymp1}
Let $\GG$ strongly connected of period $1$, and let $\rho$ be its Perron eigenvalue.
Then there exists a constant $C > 0$ such that $a_i(\ell) \sim C\rho^\ell$ as $\ell \to \infty$.
\qed\end{corollary}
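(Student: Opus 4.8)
The plan is to obtain Corollary~\ref{C:asymp1} as the special case $h=1$ of Theorem~\ref{T:periodic_asymp}. Since $\GG$ is strongly connected, $A$ is irreducible, and the hypothesis that the period equals $1$ means that in the notation of that theorem the index $r$ ranges only over $\{0\}$. Thus Theorem~\ref{T:periodic_asymp} already yields a single constant $C_0 > 0$, independent of $i$, with $a_i(\ell) \sim C_0\rho^\ell$, and we simply set $C = C_0$.

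For completeness I would spell out why the period-$1$ case is genuinely cleaner, since the proof of Theorem~\ref{T:periodic_asymp} invokes the machinery of roots of unity that becomes vacuous here. First, if $\GG$ is acyclic then $\rho=0$ by Proposition~\ref{P:acyclic} and the statement is trivial (take any $C>0$), so assume $\GG$ is not acyclic. By Theorem~\ref{T:Perron--Frobenius}(h) with $h=1$, the Perron eigenvalue $\rho$ is the \emph{unique} eigenvalue of $A$ of maximal absolute value, and it is simple. Decomposing $\vc u = \sum_\lambda \vc u^\lambda$ as in Lemma~\ref{L:main_formula} and splitting the resulting sum as in \eqref{E:split_dominant}, the dominant part is $S_1(\ell) = (\vc u^\rho B^\ell)_i = \rho^\ell (\vc u^\rho)_i$, because $\vc u^\rho$ is an honest (not merely generalised) eigenvector for the simple eigenvalue $\rho$. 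Hence $S_1(\ell) = C\rho^\ell$ with $C = (\vc u^\rho)_i$ a genuine constant, while $S_2(\ell) = o(\rho^\ell)$ since every remaining $\lambda$ has $|\lambda| < \rho$. Therefore $a_i(\ell) = S_1(\ell) + S_2(\ell) \sim C\rho^\ell$, and $C>0$ follows because $a_i(\ell) > 0$ for all $\ell$ when $\GG$ is not acyclic.

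It remains only to identify $C$ concretely and confirm positivity: by Theorem~\ref{T:BRexists_any_h}, $\delta(i) = \rho$, so $a_i(\ell)$ cannot be $o(\rho^\ell)$, which forces $C = (\vc u^\rho)_i \ne 0$; and since all coordinates of the Perron eigenvector have the same sign by Theorem~\ref{T:Perron--Frobenius}(c), we may normalise so that $\vc u^\rho$ is positive, giving $C>0$. The same argument applied coordinatewise shows that $C$ is in fact independent of $i$ up to the (positive) scaling of the eigenvector, matching the claim in Theorem~\ref{T:periodic_asymp}.

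There is essentially no obstacle here: the only thing to be careful about is that the period-$1$ hypothesis is what collapses the Jordan/nilpotent contribution — in the general-period proof the ``constant'' $C_\ell$ genuinely oscillates with $\ell \bmod h$, whereas simplicity of $\rho$ (Theorem~\ref{T:Perron--Frobenius}(b),(h)) guarantees the coefficient polynomial $P_{i\rho}(\ell)$ is a nonzero constant. Everything else is a direct specialisation of results already proved.
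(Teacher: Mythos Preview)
Your approach---specialise Theorem~\ref{T:periodic_asymp} to $h=1$---is exactly the paper's; the corollary carries only a \qed\ there, and your additional unpacking of why the period-$1$ case collapses the root-of-unity machinery is correct.

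One correction, though it does not affect the corollary as stated: your claim that $C$ is independent of $i$ is false. The paper remarks immediately after the corollary that ``the constant $C$ may depend on $i$, see for example \eqref{E:fibo_asymp}'', where $a_1(\ell)\sim\frac{1}{\sqrt5}\phi^\ell$ but $a_2(\ell)\sim\frac{\phi}{\sqrt5}\phi^\ell$. (That Theorem~\ref{T:periodic_asymp} itself asserts independence is a slip in the paper.) Your supporting argument also slips: $\vc u^\rho$ is the component of the \emph{fixed} vector $\vc u=[1\ \cdots\ 1]$ in the $\rho$-eigenspace and is therefore uniquely determined---you cannot ``normalise'' it. What is true is that $\vc u^\rho$ is automatically a positive scalar multiple of the Perron eigenvector, so every coordinate $(\vc u^\rho)_i$ is positive; but those coordinates differ in general, and that difference is precisely the $i$-dependence of $C$.
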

The constant $C$ may depend on $i$, see for example \eqref{E:fibo_asymp}.
When $\UU(i)$ is acyclic, any $C$ can be used since $\rho=0$.

\subsection{Asymptotics for the General Case}
\label{S:AGC}

The results of Theorem \ref{T:periodic_asymp} 
do not carry over to the case when $\UU(i)$ is not strongly connected. In general,
the constants $C_r$ must be replaced by eventually positive polynomials in $\ell$.
Consider, for instance, Example \ref{abff}: there
\[
a_1(\ell) = \alpha^\ell \quad a_2(\ell)=\alpha^\ell+\ell \alpha^{\ell-1}\beta
\]
and $\rho = \alpha$. Here $a_2(\ell)$ is not asymptotic to $C\alpha^\ell$ for
a constant $C$, because
\[
\frac{a_2(\ell)}{C\alpha^\ell} = \frac{\alpha^\ell+\ell \alpha^{\ell-1}}{\beta}{C\alpha^\ell} = \frac{1}{C}+ \ell\frac{\beta}{C\alpha}
\]
which does not tend to 1 as $\ell \to \infty$ (unless $\beta=0$, which was explicitly ruled out).

We now derive the general form of the asymptotics of
$a_i(\ell)$ when $\UU(i)$ is not acyclic. Again, if $\UU(i)$ is acyclic
then $a_i(\ell) = 0$ for large $\ell$.

\subsubsection{Preliminary Remarks}

Before embarking on a precise analysis, we comment on the
kind of theorem that we should expect.

The upstream subnetwork $\UU(i)$ is induced by all upstream
SCCs.  The Perron eigenvalue $\rho(i)$ is the maximum of the
Perron eigenvalues of those SCCs. The block upper triangular form of the
adjacency matrix suggests that only those SCCs whose Perron eigenvalue
is equal to this maximum affect the dominant asymptotics. 

Any SCC has an irreducible adjacency matrix, so the branching ratios
for these SCCs all equal $\rho(i)$, which we now write as just $\rho$.  

We know that for the irreducible case, $a_i(\ell)$ is asymptotic to
$P_k(\ell)\rho^\ell$ for polynomials $P_k$, where $k \equiv \ell\pmod{h}$
and $h$ is the period.

Consider the simple case of a network in which node $i$ forms a single SCC, with inputs
from two other SCCs, say $\HH_1$ and $\HH_2$, whose tails are $j_1, j_2$. 
See Fig. \ref{F:SCCs}.

\begin{figure}[h!]
\centerline{%
\includegraphics[width=0.4\textwidth]{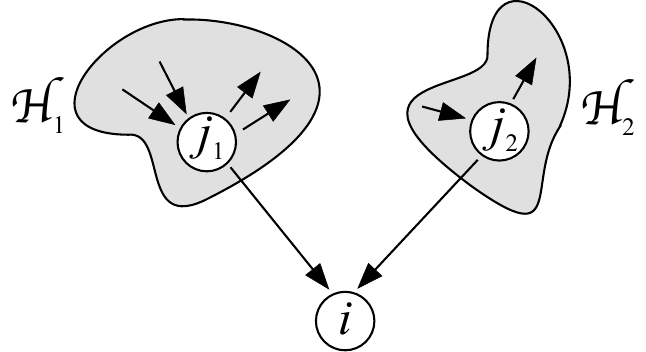}}
\caption{Node with two upstream SCCs.}
\label{F:SCCs}
\end{figure}

Suppose that $\HH_1$
has period $h_1$ and $\HH_2$ has period $h_2$. Any walk
terminating at $i$ lies entirely within one of $\HH_1,\HH_2$, except
for the final edge to vertex $i$. Therefore
\[
a_i(\ell) = a^1_{j_1}(\ell-1)+a^2_{j_2}(\ell-1)
\]
where, for $m=1,2$, the quantity $a^m_{j_m}(\ell)$ is the 
number of walks of length $\ell$ in $\HH_m$ terminating at $j_m$.
Now, 
\[
a^m_{j_m}(\ell) \sim P^m_k(\ell)\rho^\ell
\]
where the polynomials $P^m_k$ depend on $k \equiv \ell\pmod{h_m}$.
Therefore
\[
a_i(\ell) \sim (P^1_{k_1}(\ell) + P^2_{k_2}(\ell))\rho^\ell
\]
where $k_1$ depends on $\ell\pmod{h_1}$ and $k_2$ depends on $\ell\pmod{h_2}$.
The sum of these two polynomials therefore depends on  $\ell\pmod{g}$,
where $g= \mathrm{lcm}\,(h_1,h_2)$.

This situation is perhaps the simplest example where two different upstream SCCs
input to a given node. It suggests that in general we should have
asymptotic estimates of the form
\[
a_i(\ell) \sim R_k(\ell)\rho^\ell
\]
where the polynomials $R_k$ depend on $k = \ell\pmod{g}$, where $g$
is the {\em least common multiple} of the periods of the SCCs with 
Perron eigenvalue $\rho$.

\subsubsection{Statement and Proof for General Case}

We now prove that this heuristic gives the correct result.

\begin{theorem}
\label{T:asymp_general}
Let $\GG$ be any network, let $i$ be a fixed but arbitrary node,
and let the Perron eigenvalue of $\UU(i)$ be $\rho$. Define $g$ to be the
least  common multiple of the periods of those adjacency matrices
of the SCCs of $\UU(i)$ that have Perron eigenvalue $\rho$.
Then there exist eventually positive real polynomials 
$R_0(\ell),R_1(\ell),\ldots, R_{g-1}(\ell)$ such that
\[
a_i(\ell) \sim R_s(\ell)\rho^\ell\quad {\rm where}\ s = \ell \pmod{g}.
\]
These polynomials may depend on $i$ but are the same for all nodes
in the same SCC of $\GG$.

\end{theorem}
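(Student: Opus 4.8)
The plan is to push the dominant-term splitting already used for the strongly connected case (Theorem \ref{T:periodic_asymp}) one step further, now with the off-diagonal coupling between SCCs present. First I would reduce to $\UU(i)$ — which is legitimate since $a_i(\ell)$ is unchanged — and dispose of the acyclic case, where $a_i(\ell)=0$ for large $\ell$ and the statement is trivial with $g=1$. So assume $\UU(i)$ is not acyclic, write $B=B(i)$ and $\rho=\rho(i)>0$, and use \eqref{E:main_formula} to split
\[
a_i(\ell)=S_1(\ell)+S_2(\ell),\qquad S_1(\ell)=\sum_{|\lambda|=\rho}P_{i\lambda}(\ell)\lambda^\ell,\qquad S_2(\ell)=\sum_{|\lambda|<\rho}P_{i\lambda}(\ell)\lambda^\ell=o(\rho^\ell).
\]

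Next I would pin down the modulus-$\rho$ spectrum of $B$ using the block structure \eqref{E:Bupper_tri}: the diagonal blocks $B_j$ are the (irreducible) adjacency matrices of the SCCs of $\UU(i)$, with Perron eigenvalues $\rho_j\le\rho$. A block with $\rho_j<\rho$ has all its eigenvalues of modulus $<\rho$ (Theorem \ref{T:Perron-Frobenius}(a),(b)), hence contributes only to $S_2$; a block with $\rho_j=\rho$ and period $h_j$ contributes exactly the simple eigenvalues $\rho\zeta_j^m$, $\zeta_j=\ee^{2\pi\ii/h_j}$, by Theorem \ref{T:Perron-Frobenius}(h). Since every such $h_j$ divides $g$ by definition of $g$, each eigenvalue of $B$ of modulus $\rho$ has the form $\rho\omega$ with $\omega^g=1$. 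Grouping by $\omega$ gives
\[
S_1(\ell)=\rho^\ell\sum_{\omega^g=1}\omega^\ell\,P_{i,\rho\omega}(\ell),
\]
so when $\ell\equiv s\pmod g$ we get $S_1(\ell)=\rho^\ell R_s(\ell)$ with $R_s(\ell):=\sum_{\omega^g=1}\omega^s P_{i,\rho\omega}(\ell)$, a polynomial in $\ell$. Because $B$ is real, the eigenvalues $\rho\omega$ come in conjugate pairs with $P_{i,\overline{\rho\omega}}=\overline{P_{i,\rho\omega}}$ coefficientwise, so each $R_s$ has real coefficients. The $R_s$ can genuinely have positive degree: off-diagonal coupling between two SCCs that both realise $\rho$ can merge Jordan blocks for a common eigenvalue of modulus $\rho$, exactly as the entry $\beta$ does in Example \ref{abff}; this is why polynomials, not constants, appear.

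It then remains to show $a_i(\ell)\sim R_s(\ell)\rho^\ell$ and that each $R_s$ is eventually positive, and both follow from $\delta(i)=\rho$, which is already established (Theorem \ref{T:gencase}). Concretely, by Theorem \ref{T:gencase} and the Upstream Principle (Proposition \ref{P:upstream_principle}) there is a constant $c>0$ with $a_i(\ell)\ge c\rho^\ell$ for all large $\ell$; combined with $a_i(\ell)=R_s(\ell)\rho^\ell+o(\rho^\ell)$ this forces $R_s(\ell)\ge c/2$ for all large $\ell$ with $\ell\equiv s\pmod g$. Since that is infinitely many integers, the real polynomial $R_s$ is not identically zero and has positive leading coefficient, i.e.\ is eventually positive, and $R_s(\ell)\ge c/2$ eventually; hence $S_2(\ell)/(R_s(\ell)\rho^\ell)$ is bounded in modulus by $o(\rho^\ell)/((c/2)\rho^\ell)=o(1)\to0$, giving $a_i(\ell)/(R_s(\ell)\rho^\ell)\to1$. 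The last sentence — that the $R_s$ depend only on the SCC of $i$ — follows because $\UU(i)=\UU(j)$ whenever $i,j$ lie in a common SCC, together with the Upstream Principle applied in both directions, exactly as in Theorem \ref{T:periodic_asymp}. The only real obstacle here is the potential cancellation of the modulus-$\rho$ contributions in $S_1$ (now several per SCC and coupled across SCCs), but that is precisely the obstruction already overcome by the indirect Cesàro-average argument behind Theorem \ref{T:BRexists_any_h} and hence Theorem \ref{T:gencase}: once $\delta(i)=\rho$ is in hand, non-vanishing of $S_1$, indeed the uniform bound $R_s\ge c/2$, is automatic, and everything else is the bookkeeping above.
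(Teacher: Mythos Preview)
Your argument is essentially the paper's own: split $a_i(\ell)$ via Lemma~\ref{L:main_formula} into the modulus-$\rho$ part $S_1$ and a remainder $S_2=o(\rho^\ell)$, identify the modulus-$\rho$ eigenvalues from the block structure \eqref{E:Bupper_tri} and Theorem~\ref{T:Perron-Frobenius}(h) as $\rho\omega$ with $\omega^g=1$, read off polynomials $R_s$ depending on $\ell\pmod g$, and then use the lower bound $a_i(\ell)\ge c\rho^\ell$ coming from Theorem~\ref{T:gencase} (via the Upstream Principle and Theorem~\ref{T:periodic_asymp}) to rule out cancellation and force each $R_s$ to be eventually positive. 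Your grouping directly by $g$-th roots of unity is marginally tidier than the paper's grouping by SCC, and your derivation of eventual positivity (infinitely many integers with $R_s(\ell)\ge c/2$ forces positive leading coefficient) is more explicit than the paper's one-line appeal to $a_i(\ell)>0$, but the substance is identical.

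One caveat worth flagging: your justification of the theorem's final sentence---that the $R_s$ coincide for all nodes in a common SCC---is the same as the paper's (namely $\UU(i)=\UU(j)$, plus an appeal to the Upstream Principle), but this argument only shows that $\rho$, $g$, and the spectrum coincide, not that the polynomials $R_s$ themselves agree. Indeed \eqref{E:fibo_asymp} already exhibits distinct constants $1/\sqrt{5}$ and $\phi/\sqrt{5}$ for the two nodes of the Fibonacci circuit, which form a single SCC; the paper itself notes after Corollary~\ref{C:asymp1} that the constant may depend on $i$. So this clause appears to be a slip in the paper that you have reproduced rather than a defect peculiar to your proof.
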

\begin{proof}
Let $\rho$ be the Perron eigenvalue of $\UU(i)$. Let the SCCs of $\GG$ that are contained in
$\UU(i)$ be $\HH_1, \ldots, \HH_t$. Then $\rho(j)=\rho(k)$ whenever
$j$ and $k$ belong to the same SCC $\HH_s$. Define $\rho(\HH_s) = \rho(j)$
for any such $j$. Then
\[
\rho = \max\{\rho(\HH_s): 1 \leq s \leq t\}
\]
Define $T \subseteq \{1, \ldots,t\}$ by
\[
T = \{s: \rho(\HH_s) = \rho\}
\]
This is the set of (indices of) SCCs of $\UU(i)$ with Perron eigenvalue equal to $\rho$.

We show that it is these SCCs that determine the asymptotics of $a_i(\ell)$.

Number the $\HH_s$ so that $\HH_{s_1}$ being upstream from $\HH_{s_2}$
implies that $s_2 \geq s_1$. That is, the only edges with source in
 $\HH_{s_1}$ have targets in $\HH_{s_2}$ where $s_2 \geq s_1$. 
Let the period of $\HH_s$ be $h_s$.
 
If $B$ is the adjacency matrix of $\UU(i)$ then, ordering nodes 
in successive blocks according to their SCCs $\HH_s$
 as in Lemma \ref{L:upstream_ev},
$B$ is block upper triangular as in \eqref{E:Bupper_tri},
where the block $B_s$ is the adjacency matrix of $\HH_s$.  

By Lemma \ref{L:upstream_ev} the eigenvalues of $B$ are the union of those of the $B_s$. Moreover, the block
$B_s$ corresponds to the subspace $V_s$ spanned by node basis vectors $\vc e_i$
such that node $i$ belongs to $\HH_s$. Let $d_s = \dim V_s$.
By Lemma \ref{L:main_formula},
\begin{equation}
\label{E:gen_poly_sum}
a_i(\ell) = (\vc u.B^\ell)_i = \sum_\lambda (\vc u^\lambda . B^\ell)_i 
	= \sum_\lambda P_{i\lambda}(\ell)\lambda^\ell
\end{equation}
for suitable (complex) polynomials $P_{i\lambda}(\ell)$.

By Proposition \ref{P:upstream_principle} and Theorem \ref{T:periodic_asymp}, 
$a_i(\ell) \geq C\rho^\ell$ for some $C > 0$. (Choose $C$ to be the minimum of the constants $C_r$.)
This implies that the dominant terms in \eqref{E:gen_poly_sum}, which determine the
asymptotics, are those for which $|\lambda| = \rho$. Moreover, those
terms have nonzero sum, since otherwise $a_i(\ell) = o(\rho^\ell)$. Therefore
\[
a_i(\ell) \sim \sum_{|\lambda|=\rho} P_{i\lambda}(\ell)\lambda^\ell.
\]
The only eigenvalues $\lambda$ with $|\lambda|=\rho$ are those that
occur for the block matrices $B_s$ with $s \in T$. By Theorem \ref{T:Perron-Frobenius}(h) these are
\[
\zeta_s^m\rho \qquad (0 \leq m < h_s)
\]
where $\zeta_s = \ee^{2\pi\ii/h_s}$. Therefore
\[
a_i(\ell) \sim \sum_{|\lambda|=\rho} P_{i\lambda}(\ell)\lambda^\ell
	= \sum_{s\in T} \sum_{m=0}^{h_s-1} P_{i,\zeta_s^m\rho}(\ell)(\zeta_s^m\rho)^\ell
\]
Now
\[
 P_{i,\zeta_s^m\rho}(\ell)(\zeta_s^m\rho)^\ell = \zeta_s^{m\ell} P_{i,\zeta_s^m\rho}(\ell)\rho^\ell
\]
Let
\[
Q^s_{im}(\ell) =   \zeta_s^{m\ell} P_{i,\zeta_s^m\rho}(\ell)
\]
Then
\[
a_i(\ell) \sim \left( \sum_{s\in T} \sum_{m=0}^{h_s-1} Q^s_{im}(\ell) \right)\rho^\ell.
\]
Each polynomial $Q^s_{im}(\ell)$ depends only on $\ell \pmod{h_s}$.
Therefore the sum over all $s \in T$ depends only on $\ell \pmod{g}$,
where $g ={\rm lcm}\, (h_s)$. That is, there are polynomials
$R_0(\ell), \ldots, R_{g-1}(\ell)$, such that
\begin{equation}
\label{E:Rsum}
a_i(\ell) \sim R_k(\ell)\rho^\ell \qquad \mbox{where}\ k \equiv \ell \pmod{g}
\end{equation}
We can assume these polynomial are real by taking real parts.
Since $a_i(\ell)>0$, equation \eqref{E:Rsum} implies that the $R_k$ are eventually positive.

The Upstream Principle implies that these polynomials are the same for all nodes
$i,j$ in the same SCC of $\GG$, because $\UU(i)=\UU(j)$ in this case.
 
\end{proof}

Example \ref{abff} shows that in general we cannot make all $R_s(\ell)$ equal.

\end{document}